\renewcommand{\t}{\tilde{t}}
\newcommand{\R}{\mathbb{R}}
\newcommand{\x}{\boldsymbol{x}}
\newtheorem{theorem}{Theorem}
\newtheorem{lemma}{Lemma}
\newtheorem{remark}{Remark}
\newtheorem{defn}{Definition}
\title{Simultaneous recovery of attenuation and source density in SPECT}
\author{Sean Holman and Philip Richardson\\
Department of Mathematics, University of Manchester,\\
Oxford Road, Manchester M139PL, United Kingdom\\
Corresponding author email: {\it sean.holman@manchester.ac.uk}}
\date{\today}
\begin{document}
\maketitle
\begin{abstract}
We show that under a certain non-cancellation condition the attenuated Radon transform uniquely determines piecewise constant attenuation $a$ and piecewise $C^2$ source density $f$ with jumps over real analytic boundaries possibly having corners. We also look at numerical examples in which the non-cancellation condition fails and show that unique reconstruction of multi-bang $a$ and $f$ is still appears to be possible although not yet explained by theoretical results.
\end{abstract}

\section{Introduction}
This paper considers the problem of Single Photon Emission Computed Tomography (SPECT) in which we seek to recover both attenuation $a$ and radiation source density $f$. Given an attenuation $a$ and source density $f$, which are both functions on $\mathbb{R}^2$, the attenuated X-ray transform is
\begin{equation}\label{eq,AtRT}
R_af(\x,\theta) = \int_{-\infty}^\infty f(\x+t\theta) e^{-Da(\x+t\theta,\theta)}\ \mathrm{d} t
\end{equation}
where $\x\in \mathbb{R}^2$ and $\theta \in \mathbb{S}^1$, with $Da$ the beam transform given by 
\begin{equation}\label{eq,Btrans}
Da(\x,\theta)=\int_{0}^{\infty}a(\x+\rho\theta)\mathrm{d}\rho.
\end{equation}
Note that we are considering only the two dimensional case. For compactly supported $a$ and $f$ on some connected Lipschitz bounded domain  $\Omega\subset\mathbb{R}^2$, the attenuated X-ray or Attenuated Radon Transform (AtRT) occurs naturally in the solution of the 2D photon transport equation \cite{atrt}
\begin{equation}\label{phtranseq}
\begin{split}
\theta\cdot\nabla{u(\x,\theta)}+a(\x)u(\x,\theta) = f(\x), \quad (\x,\theta)\in \Omega\times\mathbb{S}^{1},\\
u|_{\Gamma^{-}}=0,
\end{split}
\end{equation}
where $u(\x,\theta)$ is the photon flux through the point $
\x$ in a unit direction $\theta\in\mathbb{S}^{1}$ and 
$$ \Gamma^{-}=\{(\x,\theta)\in\partial\Omega\times\mathbb{S}^{1}\ |\ \theta\cdot \underline{n}(\x)\leq 0\}$$
with $\underline{n}(\x)$ the unit outward pointing normal to the boundary at $\x$. Note that for dimensions larger than two the (attenuated) Radon Transform and (attenuated) X-ray transform are not the same \cite{Natterref}, but in dimension two they coincide up to parametrisation of lines and because of this we use the terms interchangeably.

The problem we will consider is determination of both $f$ and $a$ from $R_af$ which is not possible in general. This problem of recovering both $a$ and $f$ from the AtRT is sometimes called the SPECT identification problem {\cite{Natterref,Ident,Identification1,Holrich}}.  The authors of {\cite{GourionNoll}} make use of the ideas in \cite{Identification1} to show non-uniqueness for radial $a$ and $f$. This means that there are pairs of $(a_1,f_1)$ and $(a_2,f_2)$ which depend only on distance to the origin which satisfy $R_{a_1}f_1=R_{a_2}f_2$. Furthermore, numerical evidence given in \cite{Phil} shows that recovery is unstable for pairs of $a$ and $f$ which are close to being radial. A more obvious case where unique recovery is not possible is when $f=0$, as any choice of $a$ will trivially give $R_{a}0=0$.

When $a$ is known, the mapping $f \mapsto R_a f$ has been shown to be invertible under certain mild conditions on decay at infinity. In the more general setting, similar formulae have been established for attenuated tensor transforms \cite{Monard1}, and used to study the problem on surfaces for both functions and tensor fields \cite{HMS,VPKrish}. When these conditions hold, a closed form solution for the inverse is known, see \cite{inversion,Identification2,Identification3}. This means that if it is possible to recover $a$ uniquely via some method then we obtain $f$ for free.

Although unique recovery of $a$ and $f$ is impossible in the general case, under additional hypotheses it can be possible to determine $a$ and $f$. A large amount of work has been carried out in the field of medical imaging which focuses on numerical methods for the SPECT identification problem (see \cite{attcorr1,attcorr2,attcorr3} and their references), although the standard practical approach for SPECT is to first determine $a$ through a separate Computerized Tomography (CT) scan. The author of \cite{Bronnikov1} uses linearization on SPECT data to try to determine $a$ alone. The same author also gives range conditions in this case \cite{Bronnikov2,Bronnikov3}. Another approach is to make use of scattered photons for additional information for the SPECT identification problem \cite{scatter1,scatter2} but the forward model needs to be modified to include scattering which changes the mathematical problem. If $Da$ in \eqref{eq,AtRT} is replaced by a constant $\mu$ times $t$ the transform is called the exponential Radon transform. In this case, it is shown that $\mu$ can be determined from the exponential Radon transform exactly when $f$ is not radial \cite{Identification1}. {Note that $f$ is not assumed to be known for this result}. 

Most recently, work in \cite{Holrich} makes use of a multi-bang assumption on $a$. This work is inspired by the convex multi-bang regularization technique given in \cite{MB,MBorig} designed to reconstruct images in which the values expected in the image are already known. The authors of \cite{Holrich} show unique recovery of $a$ and $f$ when $f$ is $C^2_{c}(\Omega)$ and $a$ is piecewise constant over a series of nested convex sets, called nicely multi-bang. This paper extends the results given in \cite{Holrich} to consider the case where $f$ is allowed to have certain discontinuities which potentially coincide with some of the discontinuities of $a$ and allow $a$ to be piecewise constant (i.e. to remove the ``nicely multi-bang" hypothesis). In this work we have two essential hypotheses about $a$ and $f$:
\begin{enumerate}
    \item We assume that $a$ is piecewise constant with discontinuities occurring across analytic curves possibly with corners and $f$ is piecewise $C^2$ with discontinuities also occurring across analytic curves possibly with corners.
    \item We assume that discontinuities satisfy a certain non-cancellation condition given by \eqref{eq,edgecancel2}.
\end{enumerate}
We will make these assumptions more precise in Section \ref{sec,thmdef} but note that while the first is realistic, the second implies that if $a$ has a discontinuity at a location where $f$ does not, then the support of $f$ must extend outside the support of $a$ which is not realistic for most applications (i.e. it requires at least some of the radiative source to be outside the object). On the other hand, if $a$ and $f$ are always discontinuous at the same places, then this is not required.

{The rest of the paper is structured as follows. Section \ref{sec,theory} introduces some necessary definitions in subsection \ref{sec,theory:prelim} as well as notation used in the paper given in \ref{sec,theory,notation}. Section \ref{sec,thmdef} states and proves the main result, Theorem \ref{thm1}, as well as some related results in subsections \ref{subsec,jumps} and \ref{subsec,boundarydetermine}.  Section \ref{sec,NumericalMethod} describes the numerical methods used and section \ref{sec,NumericalExamples} gives some numerical examples. The final section concludes the work in the paper and suggests avenues for further research.}

\section{Definitions and Notation}\label{sec,theory}


\subsection{Preliminaries}\label{sec,theory:prelim}
This subsection contains technical definitions necessary for the statement of our main results.{ We begin by defining the types of regions over which $a$ is constant.} Recall that a function is analytic if it is $C^\infty$ and equal to its Taylor series in a neighbourhood of every point.
	\begin{defn}{\bf{(Analytic boundary)}}\label{def:analyticboundary}
		Suppose $\Omega$ is a region in $\mathbb{R}^2$. For a point $\x^* \in \partial \Omega$ we say that $\partial \Omega$ is analytic near $\x^*$ if there exists a neighbourhood V of $\x^*$ and a set of Cartesian coordinates $(x,y)$ {centred at $\x^{*}$} such that on $V$ the boundary is given by $y=f(x)$ where $f$ is an analytic function.
	\end{defn}
	\begin{defn}{\bf{(Analytic corner)}}\label{def:corner}
		Let $\x^*$ be a point on the boundary of a region $\Omega$. Then $\x^*$ is an analytic corner point of $\Omega$ if there exists a neighbourhood $V$ of $\x^*$ and Cartesian coordinates $(x,y)$ {centred at $\x^{*}$} such that we can describe the boundary {for $x<0$} via $y=f(x)$ and {for $x>0$} via $y=g(x)$, where both $f$ and $g$ are analytic in a neighbourbood of $x = 0$.
	\end{defn}
	\begin{defn}{\bf{(Piecewise analytic boundary with corners)}}\label{def:piecewisewithcorners}
		A set $\Omega$ has piecewise analytic boundaries with corners if for every point $\x^*$ in $\partial \Omega$, $\x^*$ is either an analytic corner point for $\Omega$, or $\partial \Omega$ is analytic near $\x^*$.
	\end{defn}
	With these definitions we are now ready to define precisely what we mean by a multi-bang function.
	\begin{defn}{\bf (Multi-bang)} \label{def:multibang}
		We say that $a \in L^\infty(\mathbb{R}^2)$ is multi-bang with admissible set $\mathcal{A} \subset \mathbb{R}$ if $\mathcal{A}$ is a finite set and there exists a collection of disjoint bounded open sets $\{\Omega_{j}\}_{j = 1}^n$ with piecewise analytic boundaries possibly having corners such that
		\begin{equation} \label{dmultibang}
		a = \sum_{j=1}^n a_j \chi_{\Omega_j}
		\end{equation}
		where $a_j \in \mathcal{A}$ for all $j$. Here $\chi_{\Omega_j}$ is the characteristic function of the set $\Omega_j$, and we assume that for all $\Omega_j$ the interior of the closure of $\Omega_j$ is equal to $\Omega_j$.
	\end{defn}
	 
	 We also precisely define what we mean by a piecewise $C^2$ function with piecewise analytic boundary with corners. We will denote such functions as p.a.b. $C^2$.
	\begin{defn}{\bf (Piecewise $C^2$ with piecewise analytic boundaries with corners (p.a.b. $C^2$))} \label{def,piecewiseanalyticf} 
We say that $f \in L^\infty(\mathbb{R}^2)$ is piecewise $C^2$ with piecewise analytic boundaries with corners (p.a.b. $C^2$)
	    if there exists a collection of sets  $\{\Omega_{j}\}_{j = 1}^n$ satisfying the same hypotheses as in Definition \ref{def:multibang} such that
		\begin{equation} \label{eq,fpiecewiseC2}
		f = \sum_{j=1}^n f_j \chi_{\Omega_j}
		\end{equation}
	where each $f_{j}\in C^{2}(\mathbb{R}^2)$.
	\end{defn}
		Definitions \ref{def:piecewisewithcorners}, \ref{def:multibang} and \ref{def,piecewiseanalyticf} allow for a lot of different choices for $a$ and $f$ but eliminate pathological cases such as highly oscillatory behaviour which could occur if we only require smooth boundaries. Note that when $f\in C^{\infty}_{c}(\mathbb{R}^2)$, so that $f$ has no discontinuities, and the boundaries of $a$ correspond to nested convex sets we recover the nicely multi-bang setup examined in \cite{Holrich}.
		
\subsection{Notation}\label{sec,theory,notation}

This section includes some notation that we will use throughout the paper.
We consider the attenuated X-ray transform along directed lines of the form
\[
L(\x,\theta(\omega)): t \mapsto \x + t\theta(\omega)
\]
where $\x \in \mathbb{R}^2$ and
\[
\theta(\omega) = \left (
\begin{array}{c}
\cos(\omega)
\\
\sin(\omega)
\end{array}
\right ) \in \mathbb{S}^1.
\]
We will also write
\[
\theta_\perp = \partial_\omega \theta = \left (
\begin{array}{c}
-\sin(\omega)
\\
\cos(\omega)
\end{array}
\right ) 
\]
which corresponds to a rotation of $\theta$ by $\pi/2$ radians anti-clockwise. To save space, we will use the following notation for directional limits (where $g$ is any function)
\[
\lim_{s \rightarrow (s^*)^\pm} g(s) = g((s^*)^\pm).
\]
We next include the following classification for boundary points of $a$ and $f$.
\begin{defn}{\bf($\mathcal{B},\mathcal{P},\mathcal{P}^{e})$} \label{def,P}
	For multi-bang $a$ and p.a.b. $C^2$ $f$ with boundaries $\{\partial\Omega_{j}^a\}_{j=1}^{n}$ and $\{\partial\Omega_k^f\}_{k=1}^{m}$ respectively, which may overlap, we will write
	\[
	\mathcal{B}= \left (\bigcup_{j=1}^n \partial \Omega^a_j\right ) \bigcup\left ( \bigcup_{k=1}^m \partial \Omega^f_k\right )
	\]
	for the set of all boundary points. The set $\mathcal{P}^e$ will be the subset of $\mathcal{B}$ of points that lie in the interior of a flat edge of some boundary and finally $\mathcal{P} = \mathcal{B} \setminus \mathcal{P}^e$.
\end{defn}


In a similar manner to \cite{Holrich}, we use the following definition which classifies lines tangential to flat sections of the boundaries.

\begin{defn}{\bf($ \mathcal{K}^e$)} \label{def,K}
	Suppose that $a$ is multi-bang and $f$ is p.a.b. $C^2$ with boundaries $\mathcal{B}$ as in Definition \ref{def,P}. Let $\mathcal{K}^e$ be the subset of all directed lines in $\mathbb{R}^2$ which contain a flat edge of one of the boundaries.
\end{defn}

Throughout Section \ref{sec,thmdef}, we will use the solution of \eqref{phtranseq} which is given by
\begin{equation}\label{eq,Stefanov}
u(\x,\theta(\omega))=\int_{-\infty}^{0}f(\x+t\theta(\omega))e^{-\int_{t}^{0}a(\x+(t-\rho)\theta(\omega))\mathrm{d}\rho}\mathrm{d}t.
\end{equation}
Note that when $\x\cdot\theta(\omega)$ is sufficiently large we find that $u=R_{a}f$ \cite{Ident}. We will also use the notation
\begin{equation}\label{eq,fpmdef}
f_\pm(\x,\theta) = \lim_{s\rightarrow 0^+} f(\x\pm s \theta_\perp)
\end{equation}
and similarly for $a$
\begin{equation}\label{eq,apmdef}
a_\pm(\x,\theta) = \lim_{s\rightarrow 0^+} a(\x\pm s \theta_\perp).
\end{equation}
The jumps of $f$ and $a$ across a curve tangent to $\theta$ at $\x$ are then given by
\begin{equation}\label{eq,fajumps}
\begin{split}
\Delta_\pm f(\x,\theta) & =\pm\Big (f_-(\x,\theta)-f_+(\x,\theta) \Big ),\\
\Delta_\pm a(\x,\theta) & = \pm\Big (a_-(\x,\theta)-a_+(\x,\theta)\Big ).
\end{split}
\end{equation}
In order to make the formulas in Section \ref{sec,thmdef} easier to read, we will often fix a point $\x\in \mathbb{R}^2$ and use the notation
\begin{equation}\label{eq,Datechdef}
\begin{split}
R_af(\omega) & = R_af(\x,\theta(\omega)),\\
Da(t,\omega) & = Da(\x+t\theta(\omega),\theta(\omega)),\\ f(t,\omega)& = f(\x+t\theta(\omega)).
\end{split}
\end{equation}
With this notation established we are ready to present the main result of this paper.

\section{Main Theorem}\label{sec,thmdef}

We begin by stating the novel theorem given in this paper. The proof of this theorem is the focus of this section and is split into several intermediate results. 
\begin{theorem}\label{thm1}
Let $a$ be multi-bang with boundaries $\{ \partial \Omega^{a}_{j}\}_{j=1}^{n}$ and let $f$ be p.a.b. $C^2$ with boundaries $\{ \partial \Omega^{f}_{k}\}_{k=1}^{m}$ such that for any line simultaneously tangent to $\partial \Omega_j^a$ and $\partial \Omega_k^f$ at a point $\x$ which is not a corner, $\partial \Omega_j^a = \partial \Omega_k^f$ in a neighbourhood of $\x$ (i.e. there are no cusps between the boundaries). Suppose that for all $\x^* \in \mathcal{B}$ which are not corner points and with tangent line $L(\x^*,\theta^*=\theta(\omega^*))$ to $\mathcal{B}$:
\begin{equation}\label{eq,edgecancel2}
    \begin{split}
    & \Delta_+ f(\x^*,\theta^*) - \Delta_+ a(\x^*,\theta^*) \frac{u(\x^*,\theta((\omega^*)^-)) + u(\x^*,\theta((\omega^*)^+))}{2} \neq 0.
\end{split}
\end{equation}
Then we can uniquely recover $a$ and $f$ from $R_{a}f$. 
\end{theorem}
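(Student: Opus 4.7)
The strategy is to detect the boundary set $\mathcal{B}$ from singularities of $R_a f$ viewed as a function of line parameters, invoke analyticity to extend local tangent information to global boundary arcs, and then recover the multi-bang values of $a$ before inverting for $f$ with attenuation known.

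First I would fix a point $\x$ far from the supports of $a$ and $f$ so that $R_a f(\x,\theta(\omega)) = u(\x,\theta(\omega))$ via \eqref{eq,Stefanov}, and study smoothness in $\omega$. Away from values of $\omega$ at which $L(\x,\theta(\omega))$ is tangent to some curve in $\mathcal{B}$, transverse crossings of discontinuities of $a$ and $f$ contribute smoothly to $R_a f$ after integration, so singularities in $\omega$ can arise only at tangencies, either at a non-corner point of $\mathcal{B}$ or as $L(\x,\theta(\omega)) \in \mathcal{K}^e$. Thus the set of tangent lines to $\mathcal{B}$ is in principle readable off the data.

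Second, I would compute the jump of an appropriate one-sided $\omega$-derivative of $R_a f$ at such a tangency. Near $\omega^*$ corresponding to a tangency at a non-corner $\x^*$, a small perturbation of $\omega$ either grazes past the boundary or cuts a short chord across it, depending on the sign of $\omega - \omega^*$ and on the side $\theta_\perp$ one approaches from. Expanding \eqref{eq,Stefanov} with the one-sided limits $f_\pm$, $a_\pm$ from \eqref{eq,fpmdef}--\eqref{eq,apmdef} and the nested exponential in \eqref{eq,Stefanov}, I expect the leading one-sided jump to be proportional to exactly the left-hand side of \eqref{eq,edgecancel2}: the jump $\Delta_+ f(\x^*,\theta^*)$ enters directly through the integrand, while $\Delta_+ a(\x^*,\theta^*)$ modulates the attenuation of the flux already traversing $\x^*$, producing the averaged coefficient $\tfrac{1}{2}\bigl(u(\x^*,\theta((\omega^*)^-)) + u(\x^*,\theta((\omega^*)^+))\bigr)$. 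Hypothesis \eqref{eq,edgecancel2} is then precisely the statement that this jump is nonzero, so every non-corner boundary tangency is detectable from $R_a f$.

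Third, once the family of tangent lines to the non-corner part of $\mathcal{B}$ is recovered, each local arc is reconstructed as the envelope and then promoted to a maximal analytic piece using Definition \ref{def:analyticboundary}; analytic corners from Definition \ref{def:corner} appear as common endpoints of two such arcs. The no-cusp hypothesis and the explicit form of the jump let me separate $\mathcal{B}$ into an $a$-part and an $f$-part: on a boundary belonging only to $a$ the detected jump reduces to the $\Delta_+ a$ term, on one belonging only to $f$ to the $\Delta_+ f$ term, and on a shared boundary the combined contribution together with finiteness of $\mathcal{A}$ pins down the values. Once $a$ is reconstructed this way, applying the classical closed-form inversion of the attenuated Radon transform with known attenuation (see \cite{inversion,Identification2,Identification3}) yields $f$ uniquely.

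The hard part will be the second step: correctly tracking the order of vanishing of the chord length at a tangency (roughly $|\omega-\omega^*|^{1/2}$ for a generic curved boundary but linear for a flat edge in $\mathcal{K}^e$, which explains the separate treatment required in that case), controlling the nested exponential factor so that the $\Delta_+ a$ term produces exactly the averaged flux coefficient appearing in \eqref{eq,edgecancel2}, and handling analytic corners in $\mathcal{P}\setminus\mathcal{P}^e$ by taking limits of non-corner tangencies along each of the two incident arcs rather than attempting a direct singularity computation there.
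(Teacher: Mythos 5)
Your overall architecture matches the paper's: detect tangent lines from singularities of $R_af$ in $\omega$, use the non-cancellation hypothesis \eqref{eq,edgecancel2} to guarantee every non-corner boundary point is visible, reconstruct $\mathcal{B}$ from the tangent-line family (with corners recovered as limits and flat edges treated separately), and finish with the Novikov inversion once $a$ is known. The first-order singularity computation you describe is exactly Lemma \ref{lem,tangentpoint} (the $|\omega-\omega^*|^{1/2}$ scaling for curved boundaries) and Lemma \ref{lem,edgecase} (the jump across $\omega=\omega^*$ for flat edges).

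The genuine gap is in how you recover the \emph{values} of $a$. The leading-order jump only gives you the single number $\Delta_+ f(\x^*,\theta^*) - \Delta_+ a(\x^*,\theta^*)\,u(\x^*,\theta^*)$ (times a known exponential factor), in which both $\Delta_+ f$ and $u$ are unknown continuous quantities ($f$ is only p.a.b.\ $C^2$, so its jumps are arbitrary reals, and $u$ depends on the unknown $a$ and $f$ upstream along the ray). Consequently, "finiteness of $\mathcal{A}$ pins down the values" does not work: even on a boundary belonging only to $a$, the detected jump is $\Delta_+ a\cdot u$ with $u$ unknown, and on a shared boundary one equation in the three unknowns $\Delta_+ f$, $\Delta_+ a$, $u$ cannot isolate $\Delta_+ a$ even restricted to a finite set. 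The paper's essential extra ingredient, which your proposal omits, is a \emph{second-order} singularity analysis: the coefficient of the $\omega^{-1}$ term in $\partial_\omega^2 R_af$ is $-2k_0\Delta_- a$ times the first-order limit $J_s$, so the ratio in \eqref{eq,ajump} yields $\Delta_+ a$ directly with all unknown factors cancelling; likewise $K_2/K_1=\Delta_- a$ in \eqref{eq,edgeformulafin} for flat edges. A related omission is that the edge quantities $K_n$ themselves contain the factor $e^{-D\Delta_+ a}$, so the paper must run an induction along each line starting outside the support to bootstrap the current knowledge of $\Delta_+ a$ (Lemma \ref{lem,findboundary}); and lines with several tangency points or zero-curvature tangencies require the density argument of Lemma \ref{lem,multtang} rather than the single-tangency computation you sketch. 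Without the second-derivative step the proof cannot determine $a$, so the argument as proposed does not close.
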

\begin{proof}
The proof for this is given in section \ref{subsec,boundarydetermine}.
\end{proof}

\begin{remark} \label{rem:Ham}
We point out that if $L(\x^*,\theta^*) \notin \mathcal{K}^e$ then \eqref{eq,edgecancel2} becomes
\begin{equation}\label{eq,nonzerocurvcancel}
    \Delta_{+}f(\x^*,\theta^*)-\Delta_+ a(\x^*,\theta^*) u(\x^*,\theta^*)\neq 0.
\end{equation}
In this case, $\theta^*$ can be replaced by $-\theta^*$ which also gives a line tangent to $\mathcal{B}$ at $\x^*$. It is only possible that \eqref{eq,nonzerocurvcancel} fails with both $\theta^*$ and $-\theta^*$ if
\[
u(\x^*,\theta^*) - u(\x^*,-\theta^*) = 0
\]
which provides a restrictive condition on the shape of the boundary. Interestingly, assuming $u$ is smooth, this condition can only apply at every point $\x$ in an analytic section of $\mathcal{B}$ if that section is given by the projected Hamiltonian flow for
\[
H(\x,\xi) = u\left (\x,\frac{\xi_\perp}{|\xi|}\right ) - u\left (\x,-\frac{\xi_\perp}{|\xi|} \right ).
\]
In \cite{Ident}, these curves are also used to analyse the linearisation of the identification problem in which case they appear as bicharacteristics for a certain pseudodifferential operator. The reference \cite{Ident} also gives several examples of these bicharacteristic curves in particular cases.

In fact, the requirements given in \eqref{eq,edgecancel2} and \eqref{eq,nonzerocurvcancel} are closely related to the result obtained in \cite[Prop 6.2]{Ident} which shows that, if $\Delta_+ f$ and $\Delta_+ a$ are replaced by the perturbations in a linearised version of the problem, then the quantity in \eqref{eq,nonzerocurvcancel} can be stably recovered.
\end{remark}

\begin{remark}
The hypothesis excluding cusps between the boundaries is a technical requirement which we suspect can be removed but in doing so we would likely need to modify \eqref{eq,edgecancel2} at the cusp points. It also seems likely that the technical requirement of piecewise analytic boundaries can be relaxed to piecewise smooth boundaries although it is not done in this work. Note that the analyticity of the boundaries is used in the proof of Lemma \ref{lem,multtang} when considering points at which the boundary has zero curvature and is also implicitly used in the proof of Theorem \ref{thm1} to eliminate pathological cases such as lines with infinitely many tangent points.
\end{remark}

\begin{remark}\label{rem,densereq}

The method behind the proof of Theorem \ref{thm1} is an extension of the techniques used in \cite{Holrich}. In \cite{Holrich}, the unique recovery of $a$ and $f$ is underpinned by the ability to determine those parts of the boundaries between regions of constant attenuation which are line segments or have non-zero curvature. The collection of all such points determines the boundaries. However, unlike the cases examined in \cite{Holrich}, it is possible for the quantity in \eqref{eq,nonzerocurvcancel} to be zero in which case the boundary in question cannot be determined by leading order singularities in the data whether or not the curvature is zero. Note that in general points on the boundary for which this cancellation occurs can be arbitrarily close to one another or even open sets of the boundary (although see remark \ref{rem:Ham} above). For example consider the radial case given in Figure \ref{fig,circlegroundtruth} of Section \ref{sec,NumericalExamples}. With a suitable choice of multi-bang $a$ and $f$, this choice is discussed in detail in section \ref{sec,NumericalExamples}, \eqref{eq,nonzerocurvcancel} fails on the entire boundary of $a$. Alternately, if we again take the radial example given in Figure \ref{fig,circlegroundtruth}, then it is possible to choose non-constant $f$ on the inner disk so that \eqref{eq,nonzerocurvcancel} is only zero on a portion of the circle.
\end{remark}

\subsection {Behaviour across boundaries}\label{subsec,jumps}

This section analyses the dependence of $R_{a}f$ on $\omega$ for rays which are tangent to boundaries for $a$ and/or $f$. The results are broken down into three cases which are considered separately in Lemmas \ref{lem,tangentpoint}, \ref{lem,multtang} and \ref{lem,edgecase}. First, in Lemma \ref{lem,tangentpoint}, we look lines tangent at single point on an analytic part of the boundaries with non-zero curvature. Lemma \ref{lem,multtang} then considers other cases with multiple points of tangency or zero curvature. Finally, in Lemma \ref{lem,edgecase}, we look at lines in $\mathcal{K}^e$.  

We begin with the case of lines tangent to a boundary at exactly one point where the curvature is not zero. For the statement of the Lemma, it is useful to introduce the notation
\begin{equation}\label{eq,Jdef}
\begin{split}
& J_\pm(\x, \theta^*=\theta(\omega^*)) = \lim_{\omega \rightarrow (\omega^*)^\pm} |\omega-\omega^*|^{1/2}  \partial_\omega R_a f(\x,\theta(\omega))
\end{split}
\end{equation}

\begin{lemma}\label{lem,tangentpoint}
Assume the same hypotheses as Theorem \ref{thm1} and that $L(\x,\theta^*)$ is tangent to a boundary at only one point $\x^*=\x+\ell \theta^*$ which is not a corner and at which the curvature of the boundary $\kappa$ is positive. If $s=\pm 1$ is such that $\ell s\theta^*_\perp$ points to the convex side of the boundary at $\x^*$, then
    \begin{equation}\label{eq,dwRjump}
   J_s(\x,\theta^*) = \sqrt{\frac{2 |\ell|}{\kappa}} \Big (\Delta_- f(\x^*,\theta^*) - \Delta_- a(\x^*,\theta^*) u(\x^*,\theta^*) \Big ) e^{-Da(\x^*,\theta^*)}
    \end{equation}
and $J_{-s}(\x,\theta^*) = 0$. When $\ell=0$, \eqref{eq,dwRjump} is true (and equals zero) for $s=\pm 1$. Furthermore, if $J_s(\x,\theta^*)\neq 0$, then
\begin{equation}\label{eq,ajump}
\begin{split}
&
\Delta_{+}a(\x^*,\theta^*)=\\
&\hskip.75cm \frac{\lim_{\omega \rightarrow (\omega^*)^{s}} \Bigg [ |\omega-\omega^*| \partial_\omega^2 R_a f(\omega) +\frac{1}{2} \Big ( \partial_\omega R_a f(\omega) - \partial_\omega R_a f(2\omega^*-\omega) \Big ) \Bigg ] }{\sqrt{\frac{2 |\ell|}{\kappa}} J_s(\x,\theta^*)}.
\end{split}
\end{equation}
Note that in \eqref{eq,ajump} we are using the notation given in \eqref{eq,Datechdef} with $\x$ fixed.

\end{lemma}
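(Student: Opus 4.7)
My plan is to express $R_a f(\omega)$ as a piecewise integral whose endpoints are the intersections of $L(\x,\theta(\omega))$ with the boundary, and then to differentiate using Leibniz's rule. I would work in local coordinates centred at the tangent point $\x^*$ in which $\theta^*$ lies along the first axis and the analytic boundary is given near $\x^*$ by $y = \tfrac{\kappa}{2}x^2 + O(x^3)$. For $\omega$ on the convex side (say $s(\omega-\omega^*) > 0$) a standard Puiseux computation should give two nearby intersections at $t_\pm(\omega) = \ell \pm \sqrt{2|\ell|/\kappa}\,|\omega-\omega^*|^{1/2} + O(|\omega-\omega^*|)$, while on the concave side there are no intersections locally. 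This latter fact will immediately yield $J_{-s}(\x,\theta^*) = 0$, since along such rays $R_af$ depends smoothly on $\omega$ to leading order. When $\ell = 0$ the Puiseux coefficient $\sqrt{2|\ell|/\kappa}$ vanishes, forcing $J_\pm = 0$ even though the tangent point still exists.

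\textbf{Derivation of \eqref{eq,dwRjump}.} On the convex side I would split
\[
R_a f(\omega) = \Big (\int_{-\infty}^{t_-(\omega)} + \int_{t_-(\omega)}^{t_+(\omega)} + \int_{t_+(\omega)}^{\infty}\Big ) f(t,\omega) e^{-Da(t,\omega)}\, dt,
\]
so that each sub-integrand is smooth. Differentiating, the endpoint terms pick up the singular factor $\pm t_\pm'(\omega) \sim \pm\tfrac{1}{2}\sqrt{2|\ell|/\kappa}\,|\omega-\omega^*|^{-1/2}$, while the interior $\omega$-derivatives contribute only $O(1)$ terms vanishing in the limit. Since in each boundary term the values of $f$ and $e^{-Da}$ are taken from the appropriate side of the curve, as $\omega\to(\omega^*)^s$ the four boundary contributions organise themselves into $f_\pm(\x^*,\theta^*)$ and appropriately-sided beam transforms. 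I would then recognise that the attenuation accumulated from infinity up to $\x^*$, acting on the source accumulated up to $\x^*$, is exactly $u(\x^*,\theta^*)$ as given by \eqref{eq,Stefanov}; this is how the $u$ factor enters. After multiplying by $|\omega-\omega^*|^{1/2}$ and passing to the limit, the sided jumps collapse into $\Delta_- f - \Delta_- a\, u$, producing \eqref{eq,dwRjump}.

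\textbf{Derivation of \eqref{eq,ajump} and the main obstacle.} Differentiating once more will produce a leading $|\omega-\omega^*|^{-3/2}$ singularity in $\partial_\omega^2 R_a f$. Since $\tfrac{d^2}{d\omega^2}|\omega-\omega^*|^{1/2} = -\tfrac{1}{4}|\omega-\omega^*|^{-3/2}$, I expect the combination $|\omega-\omega^*|\partial_\omega^2 R_a f + \tfrac{1}{2}\partial_\omega R_a f$ to cancel the most singular term and leave a finite limit made entirely of subleading pieces. Subtracting $\tfrac{1}{2}\partial_\omega R_a f(2\omega^* - \omega)$, which is a smooth contribution from the non-tangent side, should remove the background term not involving a jump. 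What remains ought to be proportional to $\Delta_+ a(\x^*,\theta^*) \cdot J_s(\x,\theta^*) \cdot \sqrt{2|\ell|/\kappa}$, arising from the next-order expansion of $e^{-\Delta_- a(t_+-t_-)}$ inside the middle integral, and solving for $\Delta_+ a$ yields \eqref{eq,ajump}. The hard part will be the bookkeeping of subleading terms: the $O(|\omega-\omega^*|)$ corrections to $t_\pm$, the Taylor expansions of $f$ and $Da$ around $\x^*$, and the difference between the $\pm$ sides must all combine precisely so that only the $\Delta_+ a$ coefficient survives and the other subleading contributions cancel between the two terms of the numerator.
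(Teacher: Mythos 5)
Your overall architecture is the same as the paper's: Puiseux expansion of the two intersection points $t_\pm(\omega)=\ell\pm 2k_0|\omega-\omega^*|^{1/2}+\dots$ with $k_0=\sqrt{\ell/2\kappa}$, Leibniz differentiation of a piecewise decomposition of $R_af$, bounded derivatives on the concave side giving $J_{-s}=0$, and the combination $|\omega-\omega^*|\partial_\omega^2 R_af+\tfrac12(\partial_\omega R_af(\omega)-\partial_\omega R_af(2\omega^*-\omega))$ to kill the $|\omega-\omega^*|^{-3/2}$ singularity. However, there is a genuine gap in your accounting of where the $\Delta_- a\, u$ term in \eqref{eq,dwRjump} comes from. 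You assert that the interior $\omega$-derivatives contribute only $\mathcal{O}(1)$ and that the $u$ factor emerges from the four endpoint terms ``organising themselves into appropriately-sided beam transforms.'' This cannot work: $Da(t,\omega)=\int_t^\infty a(\x+s\theta(\omega))\,\mathrm{d}s$ is continuous in $t$, so the Leibniz endpoint terms are point evaluations of $f\,e^{-Da}$ and can only produce the jump $\Delta_- f(\x^*,\theta^*)e^{-Da(\x^*,\theta^*)}$ --- they contain no integral of $f$ and hence cannot generate $u(\x^*,\theta^*)$. The $u$ term actually arises from differentiating the attenuation factor \emph{under} the integral sign: for $t<t_-(\omega)$ the ray from $\x+t\theta(\omega)$ to infinity traverses the lens between $t_-$ and $t_+$, so $\partial_\omega Da(t,\omega)$ contains $-\Delta a\,\partial_\omega(t_+-t_-)=\mathcal{O}(|\omega-\omega^*|^{-1/2})$, and integrating this singular factor against $f\,e^{-Da}$ over $(-\infty,t_-)$ yields exactly $2k_0\,\Delta_- a\,e^{-Da(\x^*,\theta^*)}u(\x^*,\theta^*)$ after the limit (this is the content of \eqref{dDa} and the last sum in \eqref{dRaaff} in the paper). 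As written, your plan would lose this term, i.e.\ the very term whose possible cancellation against $\Delta_- f$ is the subject of hypothesis \eqref{eq,nonzerocurvcancel}.

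The same misattribution recurs in your second-derivative step: you locate the surviving $\Delta_+ a\cdot J_s$ contribution in ``the next-order expansion of $e^{-\Delta_- a(t_+-t_-)}$ inside the middle integral,'' but the middle integral over $(t_-,t_+)$ has length $\mathcal{O}(|\omega-\omega^*|^{1/2})$ and contributes only $\mathcal{O}(1)$ to $\partial_\omega R_af$; the $\mathcal{O}(|\omega-\omega^*|^{-1})$ terms proportional to $\Delta_- a$ that survive the cancellation come from the cross products $\partial_\omega t_\pm\cdot\partial_\omega Da(t_\pm,\omega)$ in the endpoint terms and from differentiating the singular attenuation terms just described (the $\omega^{-1}$ terms in \eqref{dA} and \eqref{dB}). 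A smaller point: for $\ell=0$ you cannot simply set the Puiseux coefficient to zero, since the expansion of $t_\pm$ degenerates there; the paper instead observes that $t_+$ is smooth up to $\omega=0$ in that case so that $\partial_\omega R_af=\mathcal{O}(1)$, which gives $J_\pm=0$ directly. Your conclusion is right but the stated reason is not a proof.
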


\begin{proof}
After translation, rotation and possible reflection we assume without loss of generality that $\omega^* = 0$, $s=1$, $\ell\geq 0$ and $\x^*$ is the origin which implies $\x = (-\ell,0)$. In this case, it is straightforward to see that $J_{-}(\x,\theta^*) = 0$ since for small negative $\omega$ all points of intersection between $L(\x,\theta(\omega))$ and the boundaries will have bounded derivatives (you can see this by following the proof below but omitting $t_\pm$). Thus we consider only $J_+$ and choose $\epsilon$ sufficiently small so that for $0<\omega<\epsilon$ $L(\x,\theta(\omega))$ intersects the boundaries of $a$ and $f$ at a fixed number of points given by $\{t_i(\omega)\}_{i=1}^N$ and $\{\t_i(\omega)\}_{i=1}^{M}$ respectively. For the remainder of this proof we will assume $\omega$ is in this range of values and the shared point of tangency corresponds to $t_+(\omega)$ and $t_-(\omega)$ which occur in both sets of intersection points.

We write $f$, using the notation from \eqref{eq,Datechdef} with $\x$ fixed, as
\[
f(t,\omega) = \sum_{j=1}^M (f_j(t,\omega)-f_{j-1}(t,\omega)) \chi_{\{t>\t_j\}}(t) =  \sum_{j=1}^M \Delta_j f(t,\omega) \chi_{\{t>\t_j\}}(t)
\]
where each $f_j \in C_c^2(\mathbb{R}^2)$ with $f_0 = f_M = 0$ and we have defined $\Delta_j f = f_j-f_{j-1}$. We will also write $f_\pm$ for the functions $f$ on either side of the tangent point ($f_+$ is the value between $t_-$ and $t_+$) and so $\Delta_\pm f(\x^*,\theta^*) = \pm (f_-(\x^*) - f_+(\x^*))$ (recall \eqref{eq,fajumps}). Similarly, we write $a$ as
\[
a(t,\omega) = \sum_{j=1}^N (a_{j-1}-a_{j}) \chi_{\{t<t_j\}}(t) = -\sum_{j=1}^N \Delta a_j \chi_{\{t<t_j\}}(t)
\]
where the $\{a_j\}_{j=1}^N$ are constants and $\Delta_j a = a_j-a_{j-1}$. Furthermore, we define $a_\pm$ as we defined $f_\pm$ and so $\Delta_\pm a(\x^*,\theta^*) =\pm(a_--a_+)$.

For the next steps we have to consider $\ell = 0$ as a special case. First, when $\ell > 0$, using the analyticity of the boundary near $\x^*$, we know that for $k_0 = \sqrt{\frac{\ell}{2 \kappa}}$ and a constant $k_1$ the following asymptotic formulae hold as $\omega \rightarrow 0^+$
\begin{equation}
\begin{split}
t_+(\omega) &= \ell + 2k_0 \omega^{1/2} + k_1 \omega+ \mathcal{O}(\omega^{3/2}),\\
t_-(\omega) & = \ell - 2k_0 \omega^{1/2} + k_1 \omega+ \mathcal{O}(\omega^{3/2}),\label{t}
\end{split}
\end{equation}
\begin{equation}
\begin{split}
\partial_\omega t_+(\omega) & = k_0 \omega^{-1/2} + k_1+ \mathcal{O}(\omega^{1/2}), \\ \partial_\omega t_-(\omega) & = -k_0 \omega^{-1/2} +k_1+ \mathcal{O}(\omega^{1/2}), \label{dt}
\end{split}
\end{equation}
\begin{equation}
\begin{split}
\partial_\omega^2 t_+(\omega) & = -\frac{k_0}{2} \omega^{-3/2} + \mathcal{O}(\omega^{-1/2}), \\ \partial_\omega^2 t_-(\omega) & = \frac{k_0}{2} \omega^{-3/2} + \mathcal{O}(\omega^{-1/2}).\label{d2t}
\end{split}
\end{equation}
When $\ell = 0$ we have instead that $t_-(0) = 0$ and $t_+$ is smooth up to $\omega = 0$ and vanishes at $\omega = 0$.

Using the notation given in \eqref{eq,Datechdef}, the beam transform of $a$ can written
\[
Da(t,\omega) = \sum_{i=1}^N (a_{i-1}-a_i) \phi_i(t,\omega)
\]
where
\[
\phi_i(t,\omega)
=
\left \{ \begin{array}{cl}
t_i(\omega), & t< t_i(\omega),\\
t, & t_i(\omega)\leq t.
\end{array}
\right .
\]
Note that
\begin{equation}
\begin{split}
&-Da(t,\omega) = -Da(t,0) \\
& \hskip.25in + (a_- - a_+)\Big ((t_+-t_-) \chi_{\{t<t_-\}}(t) + (t_+-t) \chi_{\{t_-<t<t_+\}} \Big )
+ \mathcal{O}(\omega)
\end{split}
\end{equation}
which gives
\begin{equation}\label{eomegaas}
e^{-Da(t,\omega)} = e^{-Da(t,0)} + \mathcal{O}(\omega^{1/2}).
\end{equation}
Using the same type of reasoning we can also show that
\[
e^{-Da(t,-\omega)} = e^{-Da(t,0)} + \mathcal{O}(\omega) 
\]
and combining these together gives
\begin{equation} \label{e-omegadiff}
e^{-Da(t,\omega)}-e^{-Da(t,-\omega)} = \mathcal{O}(\omega^{1/2}).
\end{equation}
The derivative of the beam transform with respect to $\omega$ is 
\begin{equation}\label{dDa}
\partial_\omega Da(t,\omega) = -\sum_{i=1}^N \Delta_i a \ \partial_\omega t_i\ \chi_{\{t<t_i\}}(t)
\end{equation}
and $\partial_\omega D a(t,-\omega)$ is the same except without the two terms involving $t_\pm$. 

We next consider the attenuated ray transform as a function of $\omega$ which is given by 
\begin{equation}\label{Raf}
R_af(\omega) = \sum_{j=1}^M \int_{\t_j}^\infty \Delta_j f(t,\omega) e^{-Da(t,\omega)} \ \mathrm{d} t.
\end{equation}
The derivative is
\begin{equation} \label{dRaf}
\begin{split}
\partial_\omega R_a f(\omega) & = -\sum_{j=1}^M \partial_\omega \t_j \ \Delta_jf(\t_j,\omega) e^{- Da(\t_j,\omega)}\\
& \hskip0cm + \sum_{j=1}^M \int_{\t_j}^\infty \Big ( \partial_\omega \Delta_j f(t,\omega) - \partial_\omega Da(t,\omega) \Delta_j f(t,\omega) \Big )e^{-Da(t,\omega)} \ \mathrm{d} t.
\end{split}
\end{equation}
Using \eqref{dDa} we have
\begin{equation}\label{dRaaff}
\begin{split}
\partial_\omega R_a f(\omega) & =  -\sum_{j=1}^M \partial_\omega \t_j \ \Delta_jf(\t_j,\omega) e^{- Da(\t_j ,\omega)} \\
& \hskip0.5in + \sum_{j=1}^M \int_{\t_j}^\infty  \partial_\omega \Delta_j f(t ,\omega) e^{-Da(t,\omega)} \ \mathrm{d} t\\
& \hskip0.5in +  \sum_{\{ j,i \ | \ \t_j< t_i \}} \Delta_i a\ \partial_\omega t_i \int_{\t_j}^{t_i} \Delta_j f(t,\omega)e^{-Da(t,\omega)} \ \mathrm{d} t.
\end{split}
\end{equation}
Also, $\partial_\omega R_af (-\omega)$ is given by the same formula, but with the terms corresponding to $t_\pm$ removed from all sums except when $\ell = 0$. In the case $\ell = 0$, all terms in \eqref{dRaaff} are $\mathcal{O}(1)$ and the same is true when we use $-\omega$. This completes the proof when $\ell =0$ since it shows $\partial_\omega R_a f(\pm \omega) = \mathcal{O}(1)$ and so the limit defining $J_+(\x,\theta^*)$ will be zero. From now on we only consider the case $\ell >0$. Note that, in this case, the terms in the second sum in \eqref{dRaaff} corresponding to $t_\pm$ are, when taken together,
\[
\int_{t_-}^{t_+} \partial_\omega \Delta_- f(t,\omega) e^{-Da(t,\omega)} \ \mathrm{d} t=\mathcal{O}(\omega^{1/2})
\]
because of \eqref{t}, and so
\begin{equation}\label{dRaaffmin}
\begin{split}
&\partial_\omega R_a f(\omega) = \partial_\omega R_a f(-\omega) \\
&\hskip.5cm - \partial_\omega t_- \ \Delta_-f(t_-,\omega) e^{- Da(t_- ,\omega)} -  \partial_\omega t_+ \ \Delta_+f(t_+,\omega) e^{- Da(t_+ ,\omega)}\\
&\hskip1cm + \sum_{\{j \ | \ \t_j < t_-\}} \Delta_- a\ \partial_\omega t_- \int_{\t_j}^{t_-} \Delta_j f(t,\omega)e^{-Da(t,\omega)} \ \mathrm{d} t\\
&\hskip1.5cm + \sum_{\{j \ | \ \t_j < t_+\}} \Delta_+ a\ \partial_\omega t_+ \int_{\t_j}^{t_+} \Delta_j f(t,\omega)e^{-Da(t,\omega)} \ \mathrm{d} t + \mathcal{O}(\omega^{1/2}).
\end{split}
\end{equation}
The sums may be eliminated in the second and third lines to write the previous equation in the simpler form
\begin{equation}
\begin{split}
& \partial_\omega R_a f(\omega) = \partial_\omega R_a f(-\omega) \\
&\hskip1cm - \partial_\omega t_- \ \Delta_-f(t_-,\omega) e^{- Da(t_- ,\omega)} -  \partial_\omega t_+ \ \Delta_+f(t_+,\omega) e^{- Da(t_+ ,\omega)}\\
&\hskip1.5cm + \Delta_-a\ \partial_\omega t_- \int_{-\infty}^{t_-}  f(t,\omega)e^{-Da(t,\omega)} \ \mathrm{d} t \\
&\hskip2cm + \Delta_+ a\ \partial_\omega t_+ \int_{-\infty}^{t_+} f(t,\omega)e^{-Da(t,\omega)} \ \mathrm{d} t + \mathcal{O}(\omega^{1/2}).
\end{split}
\end{equation}
Furthermore, using \eqref{t} and \eqref{dt} we obtain
\begin{equation}\label{eq,usefuleq}
\begin{split}
&\partial_\omega R_a f(\omega) - \partial_\omega R_a f(-\omega) =\\
& k_0 \omega^{-1/2} \Big ( \Delta_-f(t_-,\omega) e^{- Da(t_- ,\omega)} + \Delta_- f(t_+,\omega)e^{- Da(t_+ ,\omega)} \Big )\\
& - k_0 \omega^{-1/2} \Delta_- a\ \Bigg(  \int_{-\infty}^{t_-} f(t,\omega)e^{-Da(t,\omega)} \ \mathrm{d} t + \int_{-\infty}^{t_+} f(t,\omega)e^{-Da(t,\omega)} \ \mathrm{d} t \Bigg ) \\
& \hskip.5cm + \mathcal{O}(\omega^{1/2}).
\end{split}
\end{equation} 
Based on \eqref{eq,usefuleq} we have the limit
\begin{equation}\label{dRaafflim}
\begin{split}
& \lim_{\omega\rightarrow 0^+} \omega^{1/2} \Big [ \partial_\omega R_a f(\omega) - \partial_\omega R_af(-\omega) \Big ] \\
&\hskip1.5cm = 2k_0 \Delta_- f(\ell,0) e^{-Da(\ell,0)} - 2 k_0 \Delta_- a\ \int_{-\infty}^\ell f(t,0) e^{-Da(t,0)} \ \mathrm{d} t.
\end{split}
\end{equation}
This proves formula \eqref{eq,dwRjump} since $\lim_{\omega\rightarrow 0^+} \omega^{1/2} \partial_\omega R_af(-\omega)=0$. 

Now we take the derivative of \eqref{dRaaffmin} and consider only the terms more singular than $\mathcal{O}(\omega^{-1/2})$. The derivative of $\partial_\omega R_a f(-\omega)$ contributes no such terms and so we just consider the derivatives of the other terms. To help manage the calculation, we will consider the derivatives of the other terms separately. First let us consider the first line in \eqref{dRaaffmin}
\begin{equation}\label{A}
A(\omega) = -\partial_\omega t_- \ \Delta_-f(t_-,\omega) e^{- Da(t_- ,\omega)} -  \partial_\omega t_+ \ \Delta_+f(t_+,\omega) e^{- Da(t_+ ,\omega)}.
\end{equation}
For the derivative of the beam transforms note that
\begin{align*}
-Da(t_-,\omega) & = t_- a_+ + \Delta_+ a\ t_+ + \sum_{\{i \ | \ t_i > t_+\}} \Delta_i a_i\ t_i,\\
-Da(t_+,\omega) & = t_+ a_-  + \sum_{\{i \ | \ t_i > t_+\}} \Delta_i a\ t_i
\end{align*}
and so
\begin{align*}
-\partial_\omega Da(t_-,\omega) & = a_+ \partial_\omega t_- + \Delta_+ a\ \partial_\omega t_+ + \mathcal{O}(1),\\
-\partial_\omega Da(t_+,\omega) & = a_- \partial_\omega t_+  + \mathcal{O}(1).
\end{align*}
Using these formulae to take the derivative of $A$ from \eqref{A}, we get
\begin{equation}
\begin{split}
\partial_\omega A(\omega) & = - \partial_\omega^2 t_- \Delta_- f(t_-,\omega) e^{-Da(t_-,\omega)} + \partial_\omega^2 t_+ \Delta_- f(t_+,\omega) e^{-Da(t_+,\omega)} \\
& \hskip1cm - \Big ( a_+\partial_\omega t_- + \Delta_+ a\ \partial_\omega t_+ \Big )\partial_\omega t_-  \Delta_- f(t_-,\omega)  e^{-Da(t_-,\omega)} \\
&\hskip1.5cm + a_-(\partial_\omega t_+)^2 \Delta_- f(t_+,\omega) e^{-Da(t_+,\omega)}+ \mathcal{O}(\omega^{-1/2})
\end{split}
\end{equation}
and using \eqref{dt} and \eqref{d2t} this becomes
\begin{equation}\label{dA}
\begin{split}
\partial_\omega A(\omega) & = -\frac{k_0}{2} \omega^{-3/2} \Bigg ( \Delta_- f(t_-,\omega) e^{-Da(t_-,\omega)} +\Delta_- f(t_+,\omega) e^{-Da(t_+,\omega)} \Bigg )\\
&\hskip2cm - 2 k_0^2 \omega^{-1}  \Delta_- a\  \Delta_- f(\ell, \omega)  e^{-Da(\ell,\omega)} + \mathcal{O}(\omega^{-1/2}).
\end{split}
\end{equation}
Now let us consider the last two lines in \eqref{dRaaffmin}:
\[
\begin{split}
B(\omega) &= \sum_{\{j \ | \ \t_j < t_-\}} \Delta_- a\ \partial_\omega t_- \int_{\t_j}^{t_-} \Delta_j f(t,\omega)e^{-Da(t,\omega)} \ \mathrm{d} t\\
&\hskip2cm + \sum_{\{j \ | \ \t_j < t_+\}} \Delta_+ a \ \partial_\omega t_+ \int_{\t_j}^{t_+} \Delta_j f(t,\omega)e^{-Da(t,\omega)} \ \mathrm{d} t.
\end{split}
\]
Differentiating, eliminating the sums as before and using \eqref{dt} and \eqref{d2t} we get
\begin{equation}\label{dB}
\begin{split}
& \partial_\omega B(\omega)  =\\
&\hskip.5cm \frac{k_0}{2} \Delta_- a\ \omega^{-3/2}\Bigg ( \int_{-\infty}^{t_-} f(t,\omega) e^{-Da(t,\omega)}\ \mathrm{d} t  +\int_{-\infty}^{t_+} f(t,\omega) e^{-Da(t,\omega)} \ \mathrm{d} t \Bigg )\\
&\hskip1cm - 2 k_0^2 \omega^{-1} \Delta _- a\ \Delta_- f(\ell,\omega) e^{-Da(\ell,\omega)}\\
&\hskip1.5cm + 4k_0^2(\Delta_- a)^2\omega^{-1} \int_{-\infty}^{\ell}f(t,\omega) e^{-Da(t,\omega)} \ \mathrm{d} t+ \mathcal{O}(\omega^{-1/2}).
\end{split}
\end{equation}
Combining \eqref{dA} and \eqref{dB}, and also using \eqref{eq,usefuleq}, gives
\[
\begin{split}
\omega \partial_\omega^2 R_af(\omega) & =  - \frac{1}{2} \Big ( \partial_\omega R_a f(\omega) - \partial_\omega R_a f(-\omega) \Big )  - 4 k_0^2  \Delta_- a\  \Delta_- f(\ell, \omega)  e^{-Da(\ell,\omega)}\\
&+ 4k_0^2(\Delta_- a)^2 \int_{-\infty}^{\ell}f(t,\omega) e^{-Da(t,\omega)} \ \mathrm{d} t+ \mathcal{O}(\omega^{1/2})
\end{split}
\]
from which we deduce the limit
\begin{equation}\label{d2Raflim3}
\begin{split}
&\lim_{\omega \rightarrow 0^+} \Bigg [ \omega \partial_\omega^2 R_a f(\omega) +\frac{1}{2} \Big ( \partial_\omega R_a f(\omega) - \partial_\omega R_a f(-\omega) \Big ) \Bigg ] \\
&\hskip0.5cm = -2 k_0 \Delta_- a\  \Bigg ( 2 k_0 \Delta_- f(\ell, 0)  e^{-Da(\ell,0)}- 2k_0\Delta_- a\ \int_{-\infty}^{\ell}f(t,0) e^{-Da(t,0)} \ \mathrm{d} t\Bigg ).
\end{split}
\end{equation}
By \eqref{eq,dwRjump}, the quantity in brackets on the right side of \eqref{d2Raflim3} is $J_+(\x,\theta^*)$ and so provided this is not zero we can obtain \eqref{eq,ajump}. This completes the proof of the lemma.
\end{proof}

The next lemma expands some of the results from Lemma \ref{lem,tangentpoint} to the cases of multiple tangent points along the same line and points at which the boundaries have zero curvature.

\begin{lemma}\label{lem,multtang}
Assume the same hypotheses as Theorem \ref{thm1} and suppose that $\x \notin \mathcal{P}$ and $L(\x,\theta^*) \notin \mathcal{K}^e$. Then given any $\epsilon >0$ sufficiently small either $J_+(\x+l \theta^*,\theta^*) = 0$ for all $l 
\in (-\epsilon,\epsilon)$ or $J_+(\x+l \theta^*,\theta^*) \neq 0$ for a dense set of $l \in (-\epsilon,\epsilon)$.

\end{lemma}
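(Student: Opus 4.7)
The plan is to regard $J_+(\x+l\theta^*,\theta^*)$ as a function of the single real variable $l$ and to show it is real analytic on a sufficiently small interval $(-\epsilon,\epsilon)$. The dichotomy in the conclusion then follows from the standard fact that a real analytic function on an interval is either identically zero or has only isolated zeros, so in the latter case is nonzero on an open dense subset.

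First I would use the hypotheses $\x \notin \mathcal{P}$ and $L(\x,\theta^*) \notin \mathcal{K}^e$, together with the piecewise analyticity of $\mathcal{B}$, to control the tangent structure of the line $L = L(\x,\theta^*)$. By analyticity, $L$ has only finitely many points where it is tangent to a boundary or passes through a corner, and by the hypotheses $\x$ itself is not one of them. Letting $\ell_1,\dots,\ell_K$ denote the parameters of these special points relative to $\x$, I would choose $\epsilon < \min_i |\ell_i|$. Then for all $l\in(-\epsilon,\epsilon)$ the basepoint $\x+l\theta^*$ avoids every special point, each difference $\ell_i-l$ retains a fixed sign, and in particular the convex-side condition of Lemma \ref{lem,tangentpoint} continues to place the same set of tangent points into the ``$+$'' limit.

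Next, following the decomposition used in the proof of Lemma \ref{lem,tangentpoint}, I would split $\partial_\omega R_af(\x+l\theta^*,\theta(\omega))$ into a sum of local contributions, one per tangent-type point $\x_i^*=\x+\ell_i\theta^*$, and then multiply by $|\omega|^{1/2}$ and let $\omega\to 0^+$. For each regular (nonzero curvature $\kappa_i$) tangent point that belongs to the ``$+$'' side, Lemma \ref{lem,tangentpoint} yields the contribution
\[
\sqrt{\frac{2|\ell_i-l|}{\kappa_i}}\bigl(\Delta_-f(\x_i^*,\theta^*)-\Delta_-a(\x_i^*,\theta^*)\,u(\x_i^*,\theta^*)\bigr)e^{-Da(\x_i^*,\theta^*)};
\]
the only $l$-dependence sits in $\sqrt{|\ell_i-l|}$, which is real analytic on $(-\epsilon,\epsilon)$ because $\ell_i-l$ has fixed sign there.

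The main obstacle is handling the remaining tangent-type points not covered directly by Lemma \ref{lem,tangentpoint}: zero-curvature tangencies and tangencies at corners. Analyticity of the boundary pieces pins the local geometry to $y = c\,x^{2n}+\ldots$ with $n\geq 2$ (or a one-sided analogue near a corner), so the bifurcation expansions \eqref{t}--\eqref{d2t} get replaced by expansions in $\omega^{1/(2n)}$. Reworking the proof of Lemma \ref{lem,tangentpoint} with these modified asymptotics is the bookkeeping-intensive step: I would need to show that each such contribution, once multiplied by $|\omega|^{1/2}$ and passed to the limit, is again either identically zero or a real analytic function of $l$ on $(-\epsilon,\epsilon)$ (the $l$-dependence entering only through fractional powers of the fixed-sign quantity $\ell_i-l$), and to verify that corner tangencies are handled by a one-sided version of the same argument. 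Once every local contribution is known to be analytic in $l$, their finite sum $J_+(\x+l\theta^*,\theta^*)$ is real analytic on $(-\epsilon,\epsilon)$, and the dichotomy is immediate from the zero-set structure of real analytic functions.
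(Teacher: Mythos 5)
Your proposal follows essentially the same route as the paper's proof: under the hypotheses there are finitely many tangency parameters $\ell_i \neq 0$ along the line, analyticity of the boundary gives expansions of each intersection point in powers of $\omega^{1/m_i}$ with $m_i \geq 2$, and the only $l$-dependence of the resulting leading coefficients is through $(\ell_i - l)^{1/m_i}$, which has fixed sign on $(-\epsilon,\epsilon)$ and forces the zeros in $l$ to be isolated. One caveat on your framing: at a tangency with $m_i > 2$ (zero curvature) the weight $|\omega|^{1/2}$ in the definition of $J_+$ undercompensates the singularity $\omega^{(1-m_i)/m_i}$, so the limit defining $J_+$ diverges rather than being a finite real analytic function of $l$; the argument should therefore be run on the coefficient of the most singular power of $\omega$ --- which is analytic in $l$, and which the paper shows in \eqref{eq,lem2} is a sum of terms that are individually nonzero by the non-cancellation condition \eqref{eq,nonzerocurvcancel} and carry distinct branch points $(\ell_i-l)^{1/m}$, hence is not identically zero. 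With that adjustment, your identity-theorem step is exactly the paper's ``cancellation only at isolated values of $l$'' conclusion, and interpreting a divergent limit as $J_+ \neq 0$ recovers the stated dichotomy.
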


\begin{proof}
The proof for one case, when there is a single point of tangency along $L(\x,\theta^*)$, is already established by Lemma \ref{lem,tangentpoint}. Also, if there are no points of tangency along $L(\x,\theta^*)$ then $J_+(\x,\theta^*) = 0$ and the proof is complete. For other cases the proof follows many of the same steps as in Lemma \ref{lem,tangentpoint}. Indeed, under the hypotheses there can be a finite number of points of tangency along $L(\x,\theta^*)$ given by $\{\x^*_j = \x +\ell_j \theta^*\}_{j=1}^n$ and many of the initial steps of the proof of Lemma \ref{lem,tangentpoint} are still valid except that there can be more other than two $\{t_i\}$ and $\{\tilde{t}_j\}$ corresponding to tangent points and having unbounded derivatives as $\omega \rightarrow 0^+$. Also, there can be points at which the curvature of the boundary is zero in which case the asymptotic formulae \eqref{t} and \eqref{dt} must be changed. In general, each $t_i$ (or $\tilde{t}_j$) corresponding to a tangent point will satisfy
\begin{equation}\label{t0}
    t_i = \ell_i + k_i \ell_i^{1/m_i} \omega^{1/m_i} + \mathcal{O}(\omega^{2/m_i}),
\end{equation}
\begin{equation}\label{dt0}
    \partial_\omega t_i = k_i \ell_i^{1/m_i} \omega^{\frac{1-m_i}{m_i}} + \mathcal{O}(\omega^{\frac{2-m_i}{m_i}})
\end{equation}
where each $k_i$ is a non-zero constant depending on the boundary and for all $i$ $m_i \geq 2$ ($m_i=2$ is the non-zero curvature case). Note that we have used the analyticity of the boundary for this step.

Now, following the proof of Lemma \ref{lem,tangentpoint}, \eqref{dRaaff} is still valid. Suppose that we index the tangent points along $L(\x,\theta^*)$ at which $m_i = m$ (see \eqref{t0} and \eqref{dt0}) takes the largest value as $\{t_{i}\}$. Then we can use \eqref{dRaaff}, \eqref{t0} and \eqref{dt0} to get
\begin{equation}\label{eq,lem2}
\begin{split}
\partial_\omega R_a f(\omega) & = \sum_{i} k_{i} \ell_{i}^{1/m} \omega^{\frac{1-m}{m}}\Bigg ( \Delta_{{i}} f(t_{i}) e^{-Da(t_{i})} +  \Delta_{{i}}a\  \int_{-\infty}^{t_{i}} f(t,\omega) e^{-Da(t,\omega)}\mathrm{d} t 
\Bigg ) \\
& \hskip2cm + \mathcal{O}(\omega^{\frac{1-m}{m}-\alpha})
\end{split}
\end{equation}
for some small $\alpha >0$.  Here the $\Delta_i f(t_i)$ and $\Delta_i a$ are the respective jumps along $L(\x,\theta^*)$ across the boundary corresponding to $t_i$.

Now because of the non-cancellation condition \eqref{eq,nonzerocurvcancel}, the terms in parentheses in \eqref{eq,lem2} do not vanish as $\omega \rightarrow 0^+$ and then $J_+(\x+l\theta^*,\theta^*) \neq 0$ for all $l$ near zero except possibly at isolated values where the terms in the sum cancel.

\end{proof}


\noindent Next, we consider the lines in $\mathcal{K}^e$ which are tangent to a flat edge of a boundary in the following lemma. It is useful to introduce the following notation where, as usual, $\theta^* = \theta(\omega^*)$
\begin{equation}\label{eq,K}
\begin{split}
& K_n(\x+l \theta^*,\theta^*) = e^{-D\Delta_+a(\x,\theta(\omega^*))} \frac{\partial^n}{\partial l^n} R_a f(\x+l\theta^*,\theta(\omega^*_+)) \\
&\hskip6cm + (-1)^n \frac{\partial^n}{\partial l^n} R_a f(\x+l \theta^*,\theta(\omega^*_-)).
\end{split}
\end{equation}
We only need the cases $n =1$ and $2$.

\begin{lemma}\label{lem,edgecase}
	Let $f$ be p.a.b. $C^2$ and $a$ be multi-bang. Then, for points where $f_\pm(\x+l\theta^*,\theta^*)$ and $a_\pm(\x+l\theta^*,\theta^*)$ are continuous with respect to $l$ at $l=0$,
	\begin{equation}\label{eq,edge1}
	\begin{split}
	   &K_1(\x,\theta^*)= \\
	   & \hskip1cm 2 e^{-Da_-(\x,\theta^*)} \left ( \Delta_+ f(\x,\theta^*)-\frac{\Delta_+ a(\x,\theta^*)}{2} \left (u(\x,\theta(\omega^*_+) + u(\x,\theta(\omega^*_-)) \right ) \right )
	   \end{split}
	\end{equation}
	and
\begin{equation}\label{eq,edgeformulafin}
\begin{split}
K_1(\x,\theta^*) \Delta_- a(\x,\theta^*) = K_2(\x,\theta^*).
\end{split}
\end{equation}
	
\end{lemma}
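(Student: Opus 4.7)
The plan is to work in local Cartesian coordinates aligned with the flat edge: after translation and rotation we may assume $\theta^* = (1,0)$, $\theta_\perp^* = (0,1)$, $\x = (x_0, 0)$, and the flat edge lies along the $x$-axis, so every point $\x + l\theta^*$ for small $l$ still lies on the edge. The hypothesis that $f_\pm$ and $a_\pm$ are continuous in $l$ at $l=0$ means that $\Delta_+ f$ and $\Delta_+ a$ are continuous along the edge near $\x$; since $a$ is multi-bang, $\Delta_+ a$ is in fact locally constant.

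The crucial first step is to derive an explicit formula for $R_\pm(l) := R_a f(\x + l\theta^*, \theta((\omega^*)^\pm))$. For $\omega$ slightly larger than $\omega^*$ the directed line through $(x_0+l, 0)$ with direction $\theta(\omega)$ lies just above the $x$-axis for $t>0$ and just below for $t<0$; moreover, the beam transform from a point with $t<0$ starts below the edge (picking up $a_-$) and then crosses into the upper half-plane (picking up $a_+$). Taking $\omega \to (\omega^*)^+$ and changing variables, the integral splits cleanly into a forward part involving $f_+, a_+$ and a backward part that is, up to an exponential prefactor, exactly $u(\x + l\theta^*, \theta((\omega^*)^+))$:
\[
R_+(l) = \int_{x_0+l}^{\infty} f_+(s)\, e^{-\alpha_+(s)}\, ds + e^{-\alpha_+(x_0+l)}\, u(\x + l\theta^*, \theta((\omega^*)^+)),
\]
where $\alpha_\pm(y) := \int_y^\infty a_\pm(\sigma)\, d\sigma$, and the formula for $R_-(l)$ is obtained by swapping $+$ and $-$. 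In particular, $D\Delta_+ a(\x, \theta^*) = \alpha_-(x_0) - \alpha_+(x_0)$.

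Differentiating once in $l$, I would invoke the transport identity
\[
\partial_l\, u(\x+l\theta^*, \theta((\omega^*)^\pm)) = -a_\mp(x_0+l)\, u(\x+l\theta^*, \theta((\omega^*)^\pm)) + f_\mp(x_0+l),
\]
which is simply the photon transport equation \eqref{phtranseq} in each half-plane. The $f_\mp$ contribution cancels the boundary term $-f_\mp(x_0+l) e^{-\alpha_\pm(x_0+l)}$ coming from differentiating the limits of integration, leaving the compact expression
\[
R_\pm'(l) = \pm e^{-\alpha_\pm(x_0+l)}\big(\Delta_+ f(x_0+l) - \Delta_+ a(x_0+l)\, u(\x+l\theta^*, \theta((\omega^*)^\pm))\big).
\]
Substituting into $K_1 = e^{-D\Delta_+ a(\x,\theta^*)}R_+'(0) - R_-'(0)$ and using $e^{-D\Delta_+ a(\x,\theta^*)} e^{-\alpha_+(x_0)} = e^{-\alpha_-(x_0)} = e^{-Da_-(\x,\theta^*)}$ collapses both prefactors to the common factor $e^{-Da_-(\x,\theta^*)}$ and produces precisely the symmetrized average in \eqref{eq,edge1}.

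For \eqref{eq,edgeformulafin} I would differentiate the formula for $R_\pm'$ once more, again using that $\Delta_+ a$ is locally constant and reapplying the transport identity to the $u$ derivative. The outcome at $l=0$ contains four groups of terms: an $a_\pm \Delta_+ f$ term from differentiating the exponential, a $-\Delta_+ a \cdot f_\mp$ term from differentiating the $u$ factor, a $(\Delta_+ a)^2 u(\x, \theta((\omega^*)^\pm))$ term, and a $\pm(\Delta_+ f)'$ contribution from the derivative of the jump itself. Forming $K_2 = e^{-D\Delta_+ a(\x,\theta^*)}R_+''(0) + R_-''(0)$ kills the $(\Delta_+ f)'$ terms by opposite signs, combines the $a_\pm \Delta_+ f$ and $-\Delta_+ a\cdot f_\mp$ pieces into $-2 \Delta_+ a\, \Delta_+ f$, and leaves $(\Delta_+ a)^2$ multiplying $u(\x,\theta((\omega^*)^+)) + u(\x,\theta((\omega^*)^-))$. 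Factoring $-\Delta_+ a$ out reveals exactly the bracket defining $K_1$, so $K_2 = -\Delta_+ a(\x,\theta^*)\, K_1(\x,\theta^*) = \Delta_- a(\x,\theta^*)\, K_1(\x,\theta^*)$. The main obstacle is the book-keeping: tracking which side of the edge each portion of the beam-transform integral lies on and verifying that the cross terms cancel exactly, since a sign error at any stage would destroy the cancellation; once the explicit formula for $R_\pm$ is in hand the rest is routine calculus.
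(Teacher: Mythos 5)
Your proposal is correct and follows essentially the same route as the paper: reduce to coordinates with $\omega^*=0$, write the one-sided limits $R_af(l,(\omega^*)^\pm)$ as explicit split integrals over the two half-planes, differentiate once and twice in $l$, and combine into $K_1$ and $K_2$ so that the $(\Delta_+f)'$ terms cancel and the common prefactor $e^{-Da_-}$ emerges via $e^{-D\Delta_+a}e^{-Da_+}=e^{-Da_-}$. The only cosmetic difference is that you package the backward integral as $u$ from the outset and differentiate it via the transport equation \eqref{phtranseq}, whereas the paper keeps the integrals explicit (equations \eqref{eq,edged1}--\eqref{eq,edged2}) and only identifies $u$ through \eqref{eq,Stefanov} at the final step; the computations are identical.
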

\begin{proof}
By rotating and translating, we assume without loss of generality that $\x$ is the origin and $\omega^* = 0$. 
For $\omega$ close to zero, we then define
\[
R_af(l,\omega) = R_af(l \theta^*,\theta(\omega)) = \int_{-\infty}^\infty f(l \theta^* + t \theta(\omega)) e^{-Da(l \theta^* + t \theta(\omega),\theta(\omega))} \ \mathrm{d} t,
\]
\begin{equation}\label{eq,fapmdef}
f_\pm(l) = \lim_{\epsilon \rightarrow 0^\pm} f(l,\epsilon), \quad a_{\pm}(l) = \lim_{\epsilon \rightarrow 0^\pm} a(l,\epsilon)
\end{equation}
and
\[
Da_\pm(l,l') = \int_l^{l'} a_\pm(r) \ \mathrm{d} r.
\]
Note the definitions in \eqref{eq,fapmdef} agree with those in \eqref{eq,fpmdef} and \eqref{eq,apmdef} when we take $\x = (l,0)$ and $\theta= (1,0)$. Also, we write $\Delta_\pm a(l) = \pm (a_-(l) - a_+(l))$ and $\Delta_\pm f(l) = \pm (f_-(l)-f_+(l))$.

With these definitions, we have the following limits
\[
\begin{split}
R_a f(l,0^\pm) & = \int_{-\infty}^l f_\mp(t) e^{-Da_{\mp}(t,l)-D a_\pm(l,\infty)} \ \mathrm{d} t+ \int_{l}^\infty f_\pm(t) e^{-Da_\pm(t,\infty)} \ \mathrm{d} t.
\end{split}
\]
If we take the derivative of this limit with respect to $l$ we obtain
\begin{equation}\label{eq,edged1}
\begin{split}
\frac{\partial}{\partial l} R_a f(l,0^\pm) & =\mp\Delta_+ a(l) \int_{-\infty}^l f_\mp( t) e^{-Da_{\mp}(t,l)-D a_\pm(l,\infty)} \ \mathrm{d} t \\
&\hskip3cm \pm \Delta_+ f(l) e^{- Da_\pm(l,\infty)}.
\end{split}
\end{equation}
The second derivative with respect to $l$ is
\begin{equation}\label{eq,edged2}
\begin{split}
\frac{\partial^2}{\partial l^2} R_a f(l,0^\pm) & =(\Delta_+ a(l))^2 \int_{-\infty}^l f_\mp( t) e^{-Da_{\mp}(t,l)-D a_\pm(l,\infty)} \ \mathrm{d} t\\
& \hskip1cm \mp \Delta_+ a(l) \ f_\mp(l) e^{- Da_\pm(l,\infty)}  \\
&\hskip2cm \pm \frac{\partial (\Delta_+ f)}{\partial l}(l) e^{- Da_\pm(l,\infty)}\\
& \hskip3cm \pm a_\pm(l) \Delta_+ f(l) e^{- Da_\pm(l,\infty)}.
\end{split}
\end{equation}
Formula \eqref{eq,edged1} gives
\begin{equation}\label{eq,edgecancelcond}
\begin{split}
&e^{-D\Delta_+a(l,\infty)} \frac{\partial}{\partial l} R_a f(l,0^+) - \frac{\partial}{\partial l} R_a f(l,0^-)  \\
&\hskip.25cm =2 e^{-Da_- (l,\infty)} \left (\Delta_+ f(l) - \frac{\Delta_+ a(l)}{2} \int_{-\infty}^l f_+(t) e^{-Da_+(t,l)}+f_-(t) e^{-Da_-(t,l)} \mathrm{d} t \right ),
\end{split}
\end{equation}
which, recalling the formula \eqref{eq,Stefanov} for $u$, implies \eqref{eq,edge1}. Similarly, formula \eqref{eq,edged2} implies \eqref{eq,edgeformulafin} which completes the proof.
\end{proof}

Assuming that a suitable number of points in the boundaries of $a$, do not violate \eqref{eq,edgecancel2}, Lemmas \ref{lem,tangentpoint}, \ref{lem,multtang} and \ref{lem,edgecase} give us the tools we need to be able to uniquely determine the jumps of $a$ and thus prove Theorem \ref{thm1}. The next subsection shows how this can be done.

\subsection {Determination of the jumps of $a$}\label{subsec,boundarydetermine}

This subsection completes the proof of Theorem \ref{thm1} by showing, in the next Lemma, that all jumps of $a$ can be determined. 

\begin{lemma}
\label{lem,findboundary}
Let $a$ be multi-bang and $f$ be p.a.b. $C^2$ be such that the hypothesis of Theorem \ref{thm1} are satisfied. Then $R_a f$ determines $\mathcal{B}$ as well as the jumps in $a$ at every boundary.
\end{lemma}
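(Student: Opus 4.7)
The plan is to assemble Lemmas \ref{lem,tangentpoint}, \ref{lem,multtang}, and \ref{lem,edgecase} into an explicit recovery procedure, using the analyticity of the boundary pieces and the non-cancellation hypothesis \eqref{eq,edgecancel2} to handle every boundary point.

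First I would detect $\mathcal{B}$ directly from the data. The quantities $J_\pm(\x,\theta)$, $K_1(\x,\theta)$ and $K_2(\x,\theta)$ are all computable from $R_af$ and its $\omega$- and $l$-derivatives. A non-corner point $\x^*\in\mathcal{B}$ lies on some tangent line $L(\x^*,\theta^*)$ to $\mathcal{B}$; by Lemma \ref{lem,tangentpoint} (if the curvature is nonzero and $L\notin\mathcal{K}^e$) or Lemma \ref{lem,edgecase} (if $L\in\mathcal{K}^e$), hypothesis \eqref{eq,edgecancel2} forces one of $J_+$ or $K_1$ to be nonzero at $\x^*$. Lemma \ref{lem,multtang} confirms that this nonvanishing persists along a dense subset of the tangent line even in the presence of multiple tangencies or degenerate curvature. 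Corners and isolated zero-curvature points of an otherwise smooth analytic piece form a discrete subset of each boundary arc, and so are filled in from the closure of the ordinary points along the same real-analytic arc.

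Second, with $\mathcal{B}$ and the tangent pairs $(\x^*,\theta^*)$ identified, I would read off $\Delta_\pm a(\x^*,\theta^*)$. At a non-corner, nonzero-curvature point with $L(\x^*,\theta^*)\notin\mathcal{K}^e$, formula \eqref{eq,ajump} expresses $\Delta_+a(\x^*,\theta^*)$ as a ratio of derivatives of $R_af$ whose denominator is nonzero by \eqref{eq,nonzerocurvcancel}. At a non-corner point in the interior of a flat edge, formula \eqref{eq,edgeformulafin} gives $\Delta_-a(\x^*,\theta^*)=K_2(\x^*,\theta^*)/K_1(\x^*,\theta^*)$, with denominator nonzero by \eqref{eq,edgecancel2} via \eqref{eq,edge1}. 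At a corner, or at an isolated zero-curvature point of a non-flat analytic arc, I would extend the jump by continuity along the analytic arc from nearby generic points where one of the preceding formulas applies; since $a$ is piecewise constant, the jump across a fixed boundary arc is independent of the point on the arc, so this is well defined.

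The main obstacle is the implicit nature of Lemma \ref{lem,edgecase}: the factor $e^{-D\Delta_+a(\x,\theta^*)}$ in the definition \eqref{eq,K} of $K_n$ refers to jumps of $a$ that have not yet been found. I would resolve this by a sweep-from-outside procedure, using that $a$ has compact support and thus ambient value $0$. Starting with flat edges on the outer boundary of the support, the factor $e^{-D\Delta_+a}$ is known, so \eqref{eq,edgeformulafin} applies and $\Delta_- a$ is recovered; then processing flat edges further inward, the factor is always already determined from previously computed jumps along the ray. The remaining bookkeeping at points where several boundary pieces meet, together with the continuity extension at corners, is the technically delicate part. Once $\mathcal{B}$ and every jump of $a$ are known, $a$ is uniquely reconstructed (piecewise constant, vanishing outside the support), completing the proof of the lemma; Theorem \ref{thm1} then follows by combining this with the known closed-form inversion of $R_a$ for known $a$ to recover $f$.
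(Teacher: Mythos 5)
Your proposal follows essentially the same route as the paper's proof: an outside-in inductive sweep along each ray to resolve the implicit factor $e^{-D\Delta_+ a}$ in $K_n$ and recover the flat edges together with their jumps via \eqref{eq,edgeformulafin}, then detection of the remaining boundary points through the behaviour of $J_+$ (Lemmas \ref{lem,tangentpoint} and \ref{lem,multtang}), recovery of the jumps there via \eqref{eq,ajump}, and a closure/density argument to fill in corners and degenerate points. The only notable imprecision is that you do not state the dichotomy the paper extracts from Lemma \ref{lem,multtang} --- at a genuine boundary point $J_+(\x+l\theta^*,\theta^*)$ vanishes for $l\geq 0$ and is nonzero for $l<0$, whereas elsewhere it is either identically zero near $l=0$ or nonzero for a dense set of $l$ on both sides --- which is what makes the detected set sandwiched between a dense subset of $\mathcal{P}$ and $\mathcal{P}$ itself; this is a detail of execution rather than a different approach.
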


\begin{proof}
The determination proceeds in several steps. The first step is to find all of the flat edges contained in the boundaries. Here we will use slightly modified versions of the functions $K_n$ introduced in \eqref{eq,K}. Indeed, we will inductively determine $\Delta_+ a$ along each line by replacing $\Delta_+ a$ in \eqref{eq,K} using our current knowledge $\Delta_+ a^c$ at each step ($c$ is not an index here but rather a notation indicating the current information about the jump of $a$). The induction along each line starts with $\Delta_+ a^c = 0$. With this in mind, we introduce 
\begin{equation}\label{eq,Kj}
\begin{split}
& K^c_n(\x+l \theta^*,\theta^*) = e^{-D\Delta_+a^c(\x,\theta(\omega^*))} \frac{\partial^n}{\partial l^n} R_a f(\x+l\theta^*,\theta(\omega^*_+)) \\
&\hskip6cm + (-1)^n \frac{\partial^n}{\partial l^n} R_a f(\x+l \theta^*,\theta(\omega^*_-)).
\end{split}
\end{equation}
Let us now fix a particular choice of $\x$ and $\theta^*$ and determine $\Delta_+ a$ along the line $L(\x,\theta^*)$. Along this line there will be an increasing set of points, possibly empty, given by $\{\beta_i\}_{i=1}^N$ such that $f_\pm(\x+l \theta^*)$ and $a_\pm(\x+l \theta^*)$ are continuous with respect to $l$ except for removable singularities and at the points $\{\beta_i\}_{i=1}^N$. We start from large $l$ and work backwards inductively determining each $\beta_i$ as well as $\Delta_+ a$. Indeed, for $l > \beta_N$ it must be true that $\Delta_+ a(\x+l \theta^*,\theta^*) = 0$ and so
\[
K^c_1(\x+l\theta^*,\theta^*) = K_1(\x+l \theta^*,\theta^*) = 0.
\]
In the case when there are no jumps over line segments along $L(\x,\theta^*)$, this is true for all $l$. On the other hand, assuming there are jumps over line segments,
\[
\lim_{l \rightarrow \beta_N^-} K^c_1(\x+l\theta^*,\theta^*)=\lim_{l \rightarrow \beta_N^-} K_1(\x+l\theta^*,\theta^*) \neq 0.
\]
Note that we have used the assumption \eqref{eq,edgecancelcond} and equation \eqref{eq,edge1} for this last conclusion. Thus we can determine if there are no jumps over line segments or if there are such jumps then we can determine $\beta_N$ from
\[
\beta_N = \sup \{ l \ : \ K_1^c(\x+l \theta^*,\theta^*) \neq 0\}.
\]
Furthermore, equation \eqref{eq,edgeformulafin} allows us to determine $\Delta_+ a(\x+\beta_N^- \theta^*,\theta^*)$ by
\[
\Delta_+ a(\x+\beta_N^- \theta^*,\theta^*) = \frac{K_2^c(\x+\beta_N^-\theta^*,\theta^*)}{K_1^c(\x+\beta_N^-\theta^*,\theta^*)} = \frac{K_2(\x+\beta_N^-\theta^*,\theta^*)}{K_1(\x+\beta_N^-\theta^*,\theta^*)}
\]
This completes the base case of the induction.

Now suppose by induction that, for some $i>1$, we have found $\beta_i$, $\Delta_+a(\x+l \theta^*,\theta^*)$ for $l> \beta_i$ and $\Delta_+a(\x+\beta_i^-\theta^*,\theta^*)$. Then we define
\[
\Delta_+ a^{c}(\x + l \theta^*, \theta^*) = 
\left \{ 
\begin{array}{cl}
\Delta_+ a(\x + l \theta^*,\theta^*), & \mbox{if $l\geq \beta_i$,}\\
\Delta_+ a(\x + \beta_i^- \theta^*,\theta^*),  & \mbox{if $l< \beta_i$.}
\end{array}
\right .
\]
Note that this only uses information we already know from induction and $\Delta_+ a^{c}(\x + l \theta^*, \theta^*)=\Delta_+ a(\x + l \theta^*, \theta^*)$ for $l> \beta_{i-1}$. This definition implies
\[
K^{c}_n(\x+l\theta^*,\theta^*) = K_n(\x+l\theta^*,\theta^*) 
\]
$l > \beta_{i-1}$ and also in the limit $l = \beta_{i-1}^-$. Now there are two possible case that must be considered.

First, if $K_1(\x+l\theta^*,\theta^*) = 0$ for $\beta_{i-1}<l<\beta_i$, then $\beta_{i-1}$ can be determined from
\[
\beta_{i-1} = \sup \{ l<\beta_i \ : \ K_1^{c}(\x+l \theta^*,\theta^*) \neq 0\}
\]
and finally
\[
\Delta_+a(\x+\beta_{i-1}^-\theta^*,\theta^*) = \frac{K^{c}_2(\x+\beta_{i-1}^- \theta^*,\theta^*)}{K^{c}_1(\x+\beta_{i-1}^- \theta^*,\theta^*)}
\]
which completes the induction step in this case.

Second, if $K_1(\x+l\theta^*,\theta^*) \neq 0$ for $\beta_{i-1}<l<\beta_i$ then we consider
\[
\beta = \sup \left \{ l < \beta_i \ : \ \Delta_+ a(\x + \beta_i^- \theta^*,\theta^*) \neq \frac{K^{c}_2(\x+l\theta^*)}{K^{c}_1(\x+l\theta^*)} \right \}.
\]
Then $\beta = \beta_{i'}$ for some $i' < i$ (note that if $\Delta_+ a(\x+l\theta^*,\theta^*)$ does not jump at $\beta_{i-1}$ then $\beta< \beta_{i-1}$). Having found the next point, $\beta_{i'}$ at which $\Delta_+ a(\x+l\theta^*,\theta^*)$ jumps we also get
\[
\Delta_+a(\x+\beta_{i'}^-\theta^*,\theta^*) = \frac{K^{c}_2(\x+\beta_{i'}^- \theta^*,\theta^*)}{K^{c}_1(\x+\beta_{i'}^- \theta^*,\theta^*)}
\]
which completes the induction step in the second case.

Now, by induction we have determined $\Delta_+ a(\x + l \theta^*,\theta^*)$ for $l>\beta_1$ and for $l<\beta_1$ the jump is zero. Therefore we have determined the jumps of $a$ along the entire line $L(\x,\theta^*)$. Applying this to all $\x$ and $\theta^*$ we can determine all flat edges of the boundaries (i.e. the set $\mathcal{P}^e$) as well as the jump in $a$ across each of these flat boundaries.

Now we determine the other points in the boundaries of $a$. For this, consider $\x \in \mathcal{P}$. By perturbing $\x$ slightly we can always obtain a point $\tilde{\x}$ in an analytic section of a single boundary such that the line tangent to the boundary at $\tilde{\x}$ is not tangent to any boundary at any other point. Thus the set of all such $\tilde{\x}$ is dense in $\mathcal{P}$ and we will call this set $\widetilde{\mathcal{P}}$. For now suppose $\x \in \widetilde{\mathcal{P}}$ with the line in direction $\theta^*$ tangent to the boundary at $\x$ such that $\theta^*_\perp$ points towards the convex side of the boundary. Then by Lemma \ref{lem,tangentpoint} $J_+(\x+l \theta^*,\theta^*) \neq 0$ for $l<0$ and $J_+(\x+l \theta^*,\theta^*) = 0$ for $l\geq 0$. On the other hand, if $\x \notin \mathcal{P}$ is such that $L(\x,\theta^*) \notin \mathcal{K}^e$, then Lemma \ref{lem,multtang} says that for $\epsilon$ sufficiently small, either $J_+(\x+l\theta^*,\theta^*)=0$  all $l \in (-\epsilon,\epsilon)$ or $J_+(\x+l \theta^*,\theta^*)\neq 0$ for a dense set of $l \in (-\epsilon,\epsilon)$. From the above argument, if we define the set
\[
\begin{split}
&\mathcal{G} = \Big \{ \x \ : \ \exists\ \theta^*\ \mathrm{s.t.}\  L(\x, \theta^*) \notin \mathcal{K}^e \ \mbox{and}\ J_+(\x+ \ell \theta^*,\theta) = 0 \ \mbox{when $l\geq 0$},\\
& \hskip2cm J_+(\x+ \ell \theta^*,\theta) \neq 0 \ \mbox{when $l< 0)$}
\Big \}
\end{split}
\]
then $\widetilde{\mathcal{P}} \subset \mathcal{G} \subset \mathcal{P}$. Since $\mathcal{G}$ can be determined from $R_af$ and $\mathcal{K}^e$, which we already constructed above, we can determine $\mathcal{G}$ and combining the arguments above we see that the set of all boundary points is given by
\[
\mathcal{B} = \overline{\mathcal{G}} \cup \mathcal{P}^e.
\]
Therefore we can construct all of the boundaries. Once the boundaries are constructed we can use equation \eqref{eq,ajump} from Lemma \ref{lem,tangentpoint} to determine the jump in $a$ at all $\x \in \mathcal{P}$. This finally allows us to determine the jump in $a$ across all boundaries and so completes the proof.
\end{proof}

Having determined all boundaries and jumps in $a$, the proof of Theorem \ref{thm1} follows immediately.

\begin{proof}[Proof of Theorem 1]

By Lemma \ref{lem,findboundary} we can recover the boundaries of $a$ and the jumps in $a$ across each boundary. As $a$ is compactly supported, and therefore $0$ outside the outer most boundaries, we can use the jumps to assign a unique value in each region and therefore uniquely determine $a$. 

Finally making use of the Novikov inversion formula\cite{inversion,NovikovRange} for the AtRT for known $a$ also yields $f$, as required.
\end{proof}

This concludes the theoretical part of the paper. Our earlier work in \cite{Holrich} gives a joint reconstruction algorithm for multi-bang $a$ with known admissible set and the following two sections of this paper outline the algorithm as well as examine the data produced from various multi-bang $a$ and p.a.b. $C^2$ $f$. In particular we show that cancelling cases are visible in the data for both the tangent points and edges, although it appears in the examples we have studied that it is still possible to obtain reconstructions even in the cancelling cases.

We now give an outline of the numerical method given in \cite{Holrich} used to produce joint reconstructions from $R_{a}f$ and then apply this method in cases when the cancelling condition required for Theorem \ref{thm1} either does or doesn't hold.

\section{Numerical Method}\label{sec,NumericalMethod}
This section relates to the numerical method for joint recovery of $a$ and $f$ from $R_{a}f$. We follow the same method as in \cite{Holrich} where, for a domain of interest $\Omega$, we aim to solve the variational problem 

\begin{equation}\label{eq,ContVariationalprob}
\mathrm{argmin}_{a,f\in BV(\Omega)} \|R_{a}f-d\|^{2}_{2}+\alpha \mathcal{M}(a)+\lambda \mathrm{TV}(a)+\eta\mathrm{TV}(f)
\end{equation}
where $BV(\Omega)$ is the space of functions of bounded variation on $\Omega$, see \cite{BV}, $\mathrm{TV}$ is the total variation and $\mathcal{M}$ is the multi-bang regularizer, see Definition \ref{def,MBreg}. If we split the domain $\Omega$ into $M^2$ square pixels of resolution $\mathrm{d}x$ which are lexicographically ordered from top left to bottom right and assume that $a$ and $f$ are piecewise constant then we can obtain an exact formula for $R_{a}f$, which we denote $R[a]f$ and is given in \cite{Holrich}, in the discrete case.

As in \cite{Holrich} we use the following weakly convex multi-bang regularizer.
\begin{defn}[Multi-bang regularizer]\label{def,MBreg} Let the set of admissible attenuation values be ${\mathcal{A}}:=\{a_0,a_1,...,a_n\}$ with $a_{0}<a_1<...<a_n$.  The multi-bang regularizer is given by
\begin{equation}\label{eq,contglobalMB}
\mathcal{M}(a):=\int_{\Omega}m(a({\boldsymbol{x}}))\mathrm{d}{\boldsymbol{x}}
\end{equation}
where
\begin{equation}\label{eq,multibangpointwise}
m(t)=
\left \{
\begin{array}{cl}
(a_{i+1}-t)(t-a_{i}), & \mbox{{if} $t\in[a_{i},a_{i+1}]$ {for some $i$}}\\
\infty, &\mathrm{otherwise}.
\end{array}
\right .
\end{equation}
{We refer to $m$ as the pointwise multi-bang penalty. It is 0 for all multi-bang values and then behaves quadratically when $t$ is between two adjacent $a_i$.}
\end{defn}
As $a$ and $f$ are piecewise constant over pixels, and absorbing the area of each pixel $(\mathrm{d}x)^2$ into the regularization parameter $\alpha$, \eqref{eq,contglobalMB} gives 
\begin{equation}\label{eq,discglobalMB}
\mathcal{M}(a)=\sum_{i=1}^{M^2}m(a(i)).
\end{equation}

Total variation has been widely studied and is well known to promote piecewise constant images with small perimeter\cite{TV,MB}. This combination, at least numerically \cite{Holrich}, allows us to significantly reduce the number of projections required to obtain a good reconstruction. For practical implementation we use a smoothed version of the isotropic total variation \cite{TV}
\begin{equation}\label{eq,isoTVMAT}
\mathrm{TV}_{c}(a)=\sum_{i=1}^{M^{2}-1}\sqrt{\|D_{i}a\|^{2}_{2}+c},		    
\end{equation}
where $c>0$ is a small smoothing constant and each $D_{i}\in\mathbb{R}^{2\times M^{2}}$ is a finite difference matrix as in \cite{Holrich}.
Note that the smoothness of the total variation is required in order to guarantee Lipschitz continuity of its gradient.
				    			    
After discretising, the variational problem \eqref{eq,ContVariationalprob} is equivalent to 

\begin{equation}\label{eq,discVariationalprob}
\mathrm{argmin}_{a,f\in \mathbb{R}^{M^2}} \mathcal{R}(a,f):=\|R[a]f-d\|^{2}_{2}+\alpha \mathcal{M}(a)+\lambda \mathrm{TV}(a)+\eta\mathrm{TV}(f)
\end{equation}
with $\mathrm{TV}$ and $\mathcal{M}$ as in \eqref{eq,isoTVMAT} and $\eqref{eq,discglobalMB}$.

Following \cite{Holrich} we use the alternating minimization scheme described in \cite{alternating}

\begin{equation}\label{eq,Alternating1}
\begin{split}
a^{k+1}&\in\mathrm{argmin}_{a}\mathcal{R}(a,f^{k})+\frac{1}{2\xi^{k}}\|a-a^{k}\|^{2},\\				    			     
f^{k+1}&\in\mathrm{argmin}_{f}\mathcal{R}(a^{k+1},f)+\frac{1}{2\xi^{k}}\|f-f^{k}\|^{2},				    			     
\end{split}
\end{equation}
for sufficiently small $\{\xi_{k}\}_{k=1}^\infty$. Then each alternating update is itself split into two updates and solved using an ADMM algorithm \cite{addm} as described in further detail in \cite{Holrich}. Note that we also use an Iterative Shrinkage Thresholding Algorithm (ISTA) in the ADMM algorithm when we update $a$. We also use the same adaptive scheme for $\beta$ given in \cite{Holrich}.

We then use following joint reconstruction algorithm from \cite{Holrich}.
\begin{algorithm}[H]
\caption{Joint reconstruction algorithm}\label{jointrecon}
\begin{algorithmic}[1]
\State Input $a^0$ as initial guess, step sizes $t,\beta^{0}$, tolerances $\delta_{1},\delta_{2},\delta_{3},\delta_4,\delta_5$ and regularization parameters $\alpha,\lambda$ and $\mu $.
\State Set $f^{0}$ to be the least squares solution of $\|R[a^{0}]f-d\|^{2}$.
\For{$k\geq 0$}
\State Set $x^0 = a^k$ and $y^0 = D x^0$.
\For{$l\geq 0$}
\State Update $x^{l+1}$ via ISTA or FISTA with $\delta_1$ as a tolerance on $\|x^{l+1}-x^{l}\|$.
\State Update $y^{l+1}$ via gradient descent.
\State Set $\mu^{l+1}=\mu^{l}+\beta^{l}(y^{l+1}-Dx^{l+1})$.
\State Update $\beta^{l+1}$ via adaptive scheme.
\State Terminate when $r^{l}<\delta_2$ and $s^{l}<\delta_{3}$ and output $a^{k+1} = x^{l+1}$.
\EndFor
\State Update $f^{k+1}$ using ADMM with tolerance $\delta_4$.
\State Terminate when $\|a^{k+1}-a^{k}\|_{2}<\delta_5$ and $\|f^{k+1}-f^{k}\|_{2}<\delta_5$.
\EndFor
\end{algorithmic}
\end{algorithm}		
\noindent As in \cite{Holrich}, in this algorithm $\beta^0$ is reset to the same initialised value whenever the inner iterations aimed at the $a$ update in \eqref{eq,Alternating1} (those indexed by $l$) restart. The next section gives some numerical reconstructions obtained from Algorithm \ref{jointrecon} for phantoms which may or may not satisfy Theorem \ref{thm1}.

\section{Numerical Examples}\label{sec,NumericalExamples}

In section \ref{subsec,jumps}, we showed that, provided certain limits do not vanish, we can recover jumps in $a$ even if $f$ jumps at the same place. If the relevant limit does vanish, we say it is a {\it cancelling case}. We now examine some cases where the methods in sections \ref{subsec,jumps} and \ref{subsec,boundarydetermine} do not apply by considering a radial and an edge cancelling case. We also give reconstructions in both cases for cancelling and non-cancelling examples. Our examples appear to show that reconstruction is still possible using the algorithm presented in section \ref{sec,NumericalMethod} even in cancelling cases.

\subsection{Radial cancelling case}\label{section,radialcase}

Consider the family of $a$ and $f$ indicated by Figure \ref{fig,circlegroundtruth}. This family consists of $a$ which are constant with value $c$ on a circular region of radius $0.5$ and equal to zero otherwise. The phantoms for $f$ in this family are composed of two regions: an inner disk of radius $0.5$ on which $f=f_2$ and an annulus of outer radius $0.8$ on which $f=f_1$ so that $f_2-f_1$ is the jump across the inner circle of radius $0.5$.
\begin{figure}
    \centering
    \includegraphics[scale=0.5]{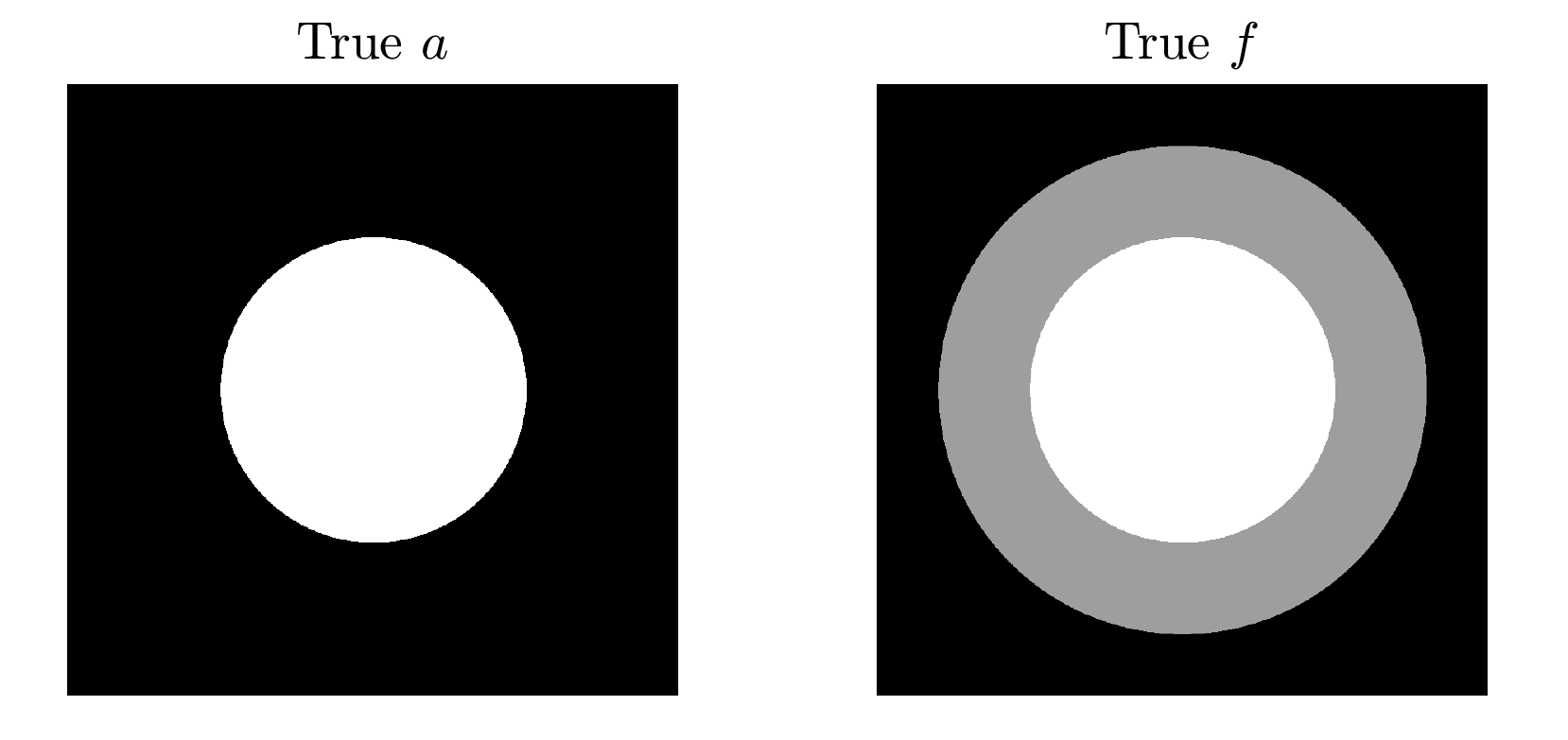}
    \caption{The left figure shows $a$ and the right shows $f$. Both $a$ and $f$ are multi-bang with $a$ taking two values either $0$ or $c$ and $f$ taking three values from $\{0,f_1,f_2 \}$, where $f_1$ is the value of $f$ on the annulus and $f_2$ is the value of $f$ on the inner circle and $f_1\neq f_2$. Here the inner radius is 0.5 and the outer radius is 0.8.}
    \label{fig,circlegroundtruth}
\end{figure}
In Lemma \ref{lem,tangentpoint}, we showed that the jump in $a$ can be recovered provided that the right hand side of equation \eqref{eq,dwRjump} is not zero. Given a point $\x \in\R^2$, the right hand side of \eqref{eq,dwRjump} can be computed exactly, for $a$ and $f$ as in Figure \ref{fig,circlegroundtruth}, in terms of $c,f_1$ and $f_2$. Note that in our case $f_{-}=f_1$, $f_{+}=f_2$, $c_+=c$ and $c_{-}=0$ for \eqref{eq,dwRjump}.

Let $\x$ be the most southerly point on a circle of radius 0.6 , so that $\x$ is in the annulus of $f$ and outside the support of $a$. We then consider the rays $L(\x,\theta(\omega))$ which pass through $\x$ and are tangent to the shared boundary of $a$ and $f$.  For a ray tangent to this shared boundary, setting \eqref{eq,dwRjump} to zero gives one equation for 3 unknowns $f_1$,$f_2$ and $c$. Since $f_1\neq f_2$ and $c>0$, each term on the right hand side of \eqref{eq,dwRjump} cannot be zero and therefore fixing 2 of the 3 variables $c,f_1,f_2$ gives exactly one value of the third which makes the right hand side of \eqref{eq,dwRjump} zero. For example, if we set $c=f_1=1$ and then plug these values into \eqref{eq,dwRjump} and set the right hand side to zero we obtain an equation involving known parameters and $f_2$ which we can solve. After some algebraic manipulation we find the triplet $f_1=1$, $a=1$ and $f_2\approx 1.6245$ causes the right hand side of \eqref{eq,dwRjump} to be zero and is therefore a cancelling case. 

\begin{figure}
    \centering
    \includegraphics[scale=0.5]{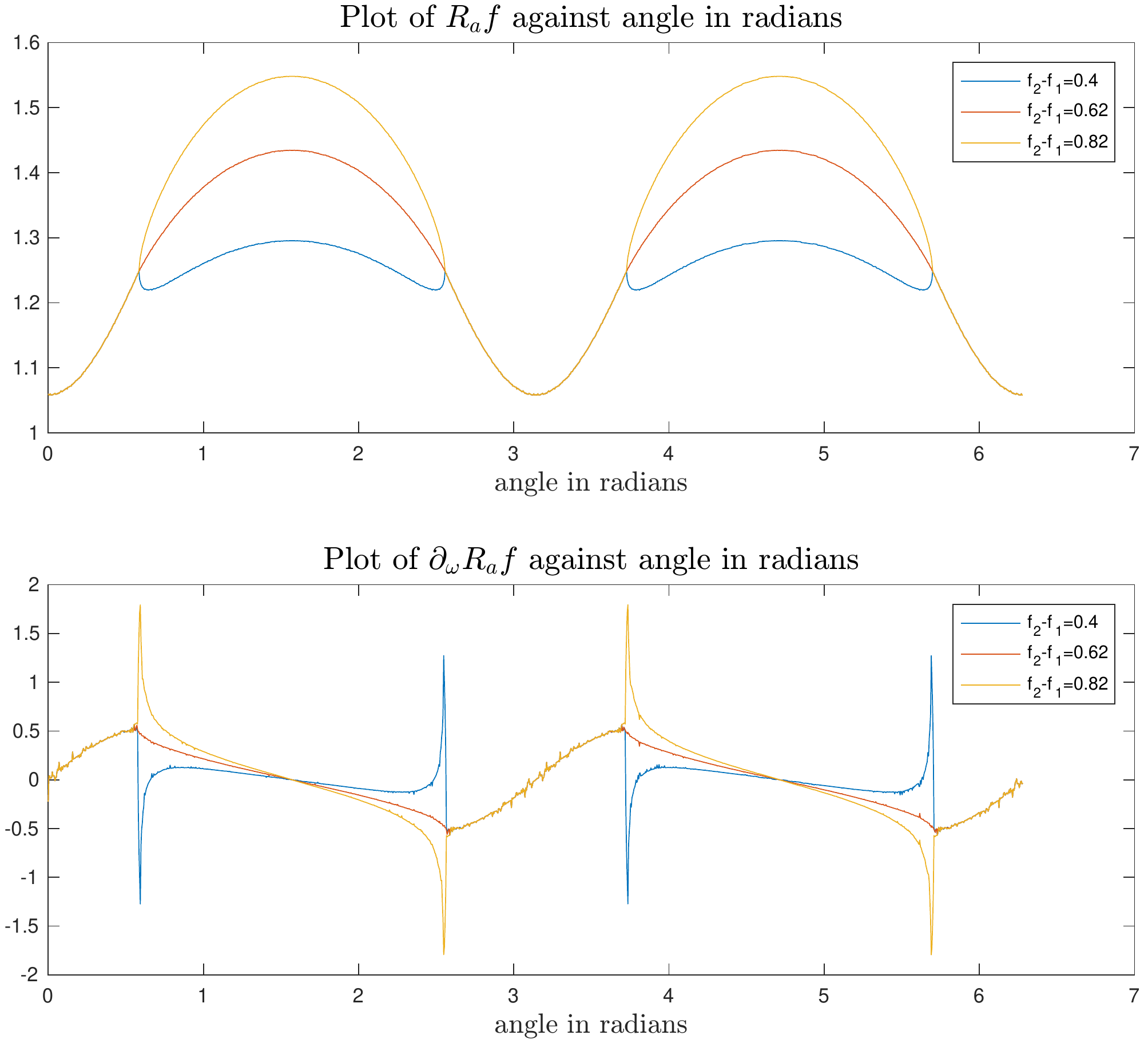}
    \caption{The top plot shows plots of $R_{a}f$ against angle $\omega$ and the bottom plot shows $\partial_{\omega}R_{a}f$ against angle. The blue, yellow and red lines correspond to the values of the jump $f_1$ which are larger, smaller and equal to the critical value respectively.}
    \label{fig,circlederplot}
\end{figure}
Figure \ref{fig,circlederplot} shows plots of the $R_{a}f$ and $\partial_{\omega}R_{a}f$ against angle $\omega$ for various choices of $f_2-f_1$ with $c=1$, and $f_1=1$. Note that, since $f_1=1$, for all cases the rays with angle $\omega$ between $2.65$ and $3.75$ do not intersect the inner circle of $a$ and $f$ and are therefore all the same. The red line corresponds with the critical choice of $f_1$ where the right hand side of \eqref{eq,dwRjump} is zero. We see that for both the yellow and blue lines there is a kink in the plot corresponding to $\omega$ when $L(\x,\theta(\omega))$ is tangent to the shared boundary whereas the red line for $f_2-f_1=0.6245$ is smooth. The bottom plot in Figure \ref{fig,circlederplot} shows the corresponding gradients. In both the yellow and blue lines we see that the gradient spikes at the tangent $\omega$ whereas the gradient of the red line is continuous for all $\omega$. This agrees with the theory as when the right hand side of \eqref{eq,dwRjump} is zero, $\partial_{\omega}R_{a}f$ is bounded. This also shows that the effects of cancelling cases are visible in the data $R_{a}f$.

\begin{figure}
\centering
\includegraphics[scale=0.4]{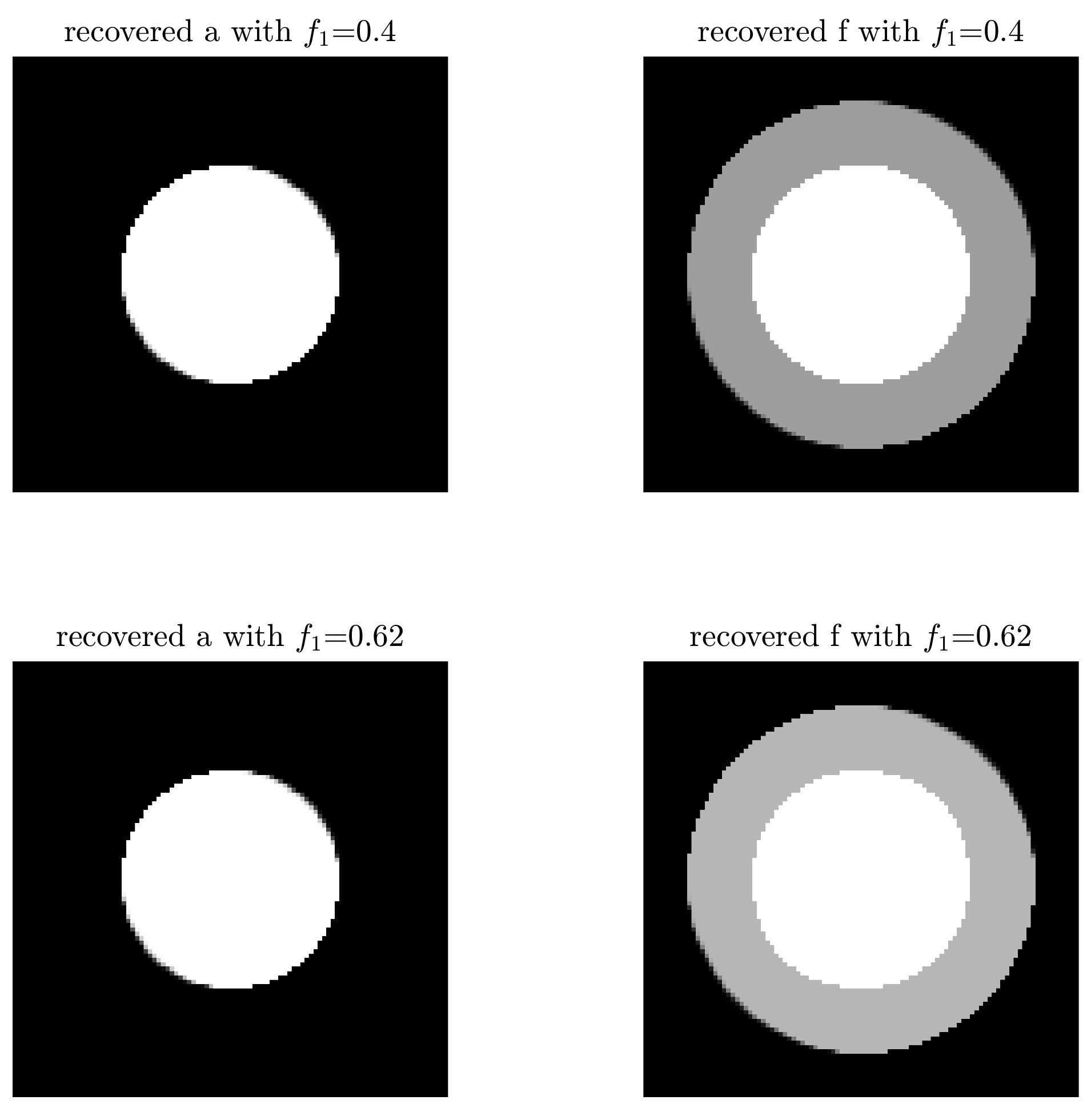}
\caption{Joint reconstructions of phantoms given in Figure \ref{fig,circlegroundtruth}. The top row uses $a$=1, $f_1=1$ and $f_2=1.4$. The bottom row uses $a=1$, $f_1=1$ and $f_2=1.62$}
\label{fig,circlerecovery}
\end{figure}
We now give some numerical reconstructions which show that, despite this effect in the data, we can still obtain usable reconstructions regardless of the choices of $f_1,f_2$ and $c$. Figure \ref{fig,circlerecovery} shows two joint reconstructions for the circle phantoms shown in Figure \ref{fig,circlegroundtruth}. The top row has $c=1$, $f_1=1$ and $f_2=1.4$ as the true phantom. The bottom row has $a=1$, $f_1=1$ and $f_2=1.62$ as the true phantom. The admissible set for $a$ in both cases is $\mathcal{A}=\{0,0.2,0.4,0.6,0.8,1\}$. Note that even though $f$ is multi-bang there is no multi-bang regularization on $f$. The choice of $a$,$f_1$ and $f_2$ in the top row is a non cancelling case whereas the choice in $a$, $f_1$ and $f_2$ in the bottom row is a cancelling case. In both cases, with suitable parameters, we are able to recover $a$ and $f$ with $5\%$ added Gaussian noise. Even when widely varying the reconstruction parameters $\alpha,\gamma_{a},\gamma_{f}$ and step sizes $t,\rho$ the reconstructed image remains the same for the bottom row. This means that despite the issues it causes our theoretical method, as well as the fact illustrated in Figure \ref{fig,circlederplot} that the singularity in $\partial_\omega R_{a}f$ does disappear, the reconstructions appear to be unique even in cancelling cases. Precisely what is causing this to happen is an area for further research. With an example of a potentially problematic case for a point of tangency considered, we now turn our attention to an interesting case for an edge.
 
\subsection{Edge cancelling case}

Throughout this section we are going to consider the family of phantoms indicated in Figure \ref{fig,squaregroundtruth}. 
\begin{figure}
    \centering
    \includegraphics[scale=0.5]{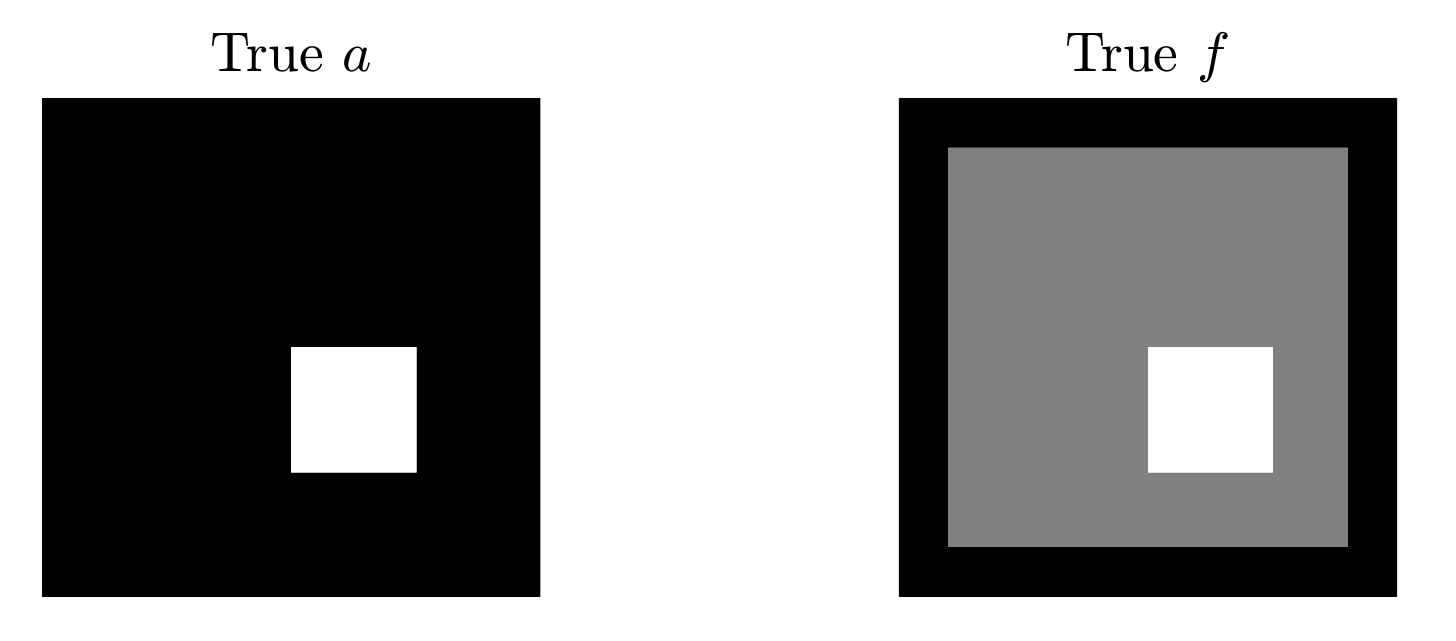}
    \caption{Phantoms for $a$ and $f$. Here, $a$ and $f$ are multi-bang with both $a$ and $f$ jumping over the inner square. The value of $f$ in the inner square is $f_2$ and in the outer region it is $f_1$.}
    \label{fig,squaregroundtruth}
\end{figure}
The left column in Figure \ref{fig,squaregroundtruth} is the phantom for $a$ which consists of a single square with $a=c$ on this square. The right column will be the phantom of $f$ which consists of two regions. The inner square, which is also present in the same location as the phantom for $a$, takes value $f_2$ and the outer region which takes value $f_1$. The relevant equation for this case is \eqref{eq,edge1}
Recalling the proof of Lemma \ref{lem,findboundary}, we can recover the jump in $a$ across an edge provided that $K_1(\x+\beta_-\theta^*, \theta^*) \neq 0$ is not zero where $\beta$ is the end point of the edge. Applying \eqref{eq,edge1}, the important quantity which cannot vanish is
\begin{equation}\label{eq,edgecancel}
	\begin{split}
	   &K_1(\x+ \beta_- \theta^*,\theta^*)= \\
	   & \hskip1cm 2 e^{-Da_-(\x+\beta_- \theta^*,\theta^*)} \Big ( \Delta_+ f(\x+\beta_- \theta^*,\theta^*)\\
	   & \hskip1cm-\frac{\Delta_+ a(\x+\beta_- \theta^*,\theta^*)}{2} \left (u(\x+\beta_- \theta^*,\theta(\omega^*_+) + u(\x+\beta_- \theta^*,\theta(\omega^*_-)) \right ) \Big ).
	   \end{split}
\end{equation}
In the following examples we fix $c=1$ and $f_2=1$ and then examine what happens for different choices of $f_1$. Instead of considering tangent rays to the boundary as in the radial case, we consider the bottom edge of the inner square shared by $a$ and $f$. In this case we have $f_{-}=f_1,f_{+}=1,a_{-}=0$ and $a_{+}=1$. In a similar manner to the radial case setting the right hand side of \eqref{eq,edgecancel} to zero gives an equation involving $f_1$ and known parameters which can be solved. After some algebraic manipulation, we find that the cancelling case for $f_2=c=1$ is when $f_1=0.4244.$

\begin{figure}[t]
    \centering
    \includegraphics[scale=0.6]{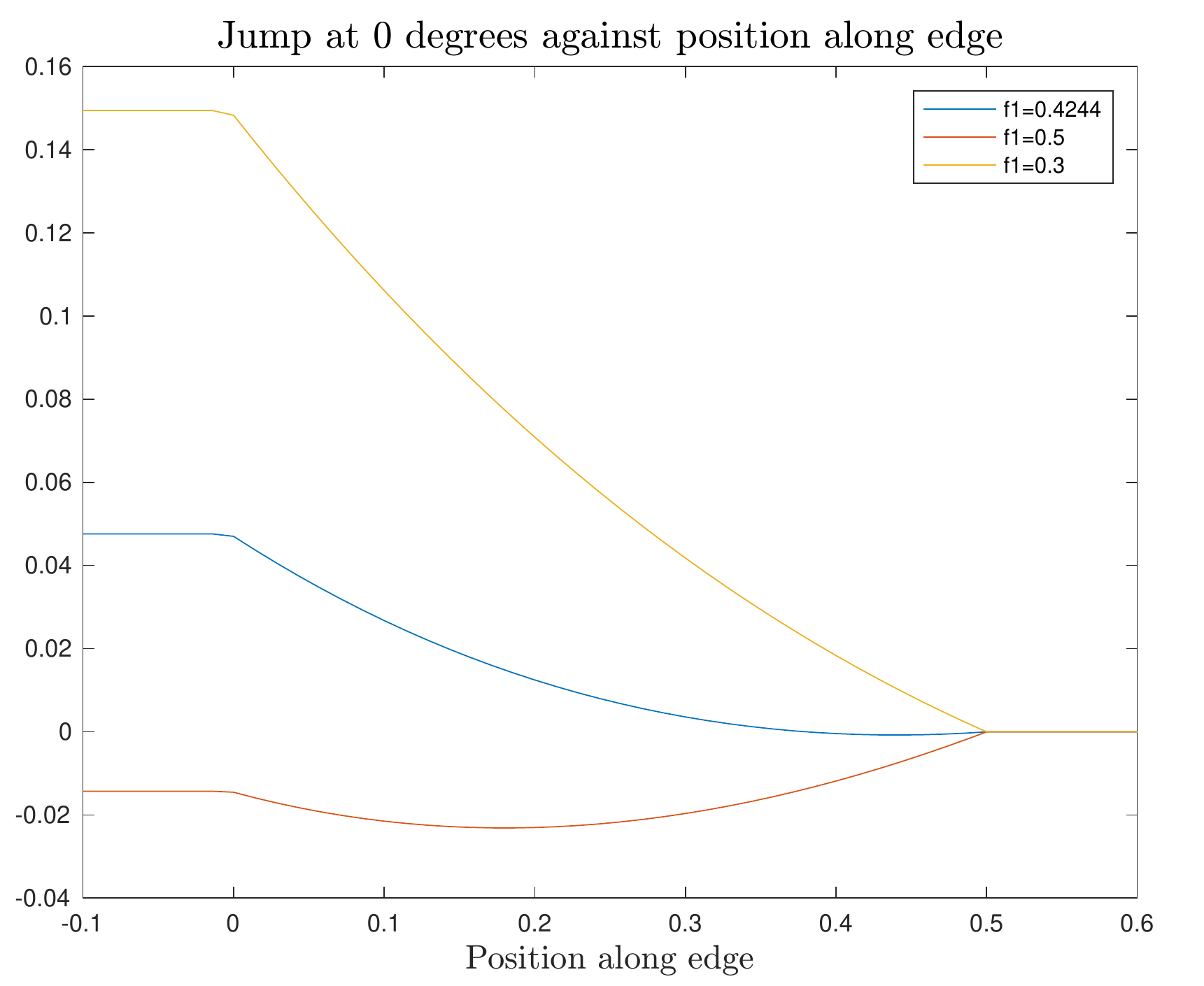}
    \caption{Plot of jump at $\omega=0$ against position along edge. Note that the edge lies between $0$ and $0.5$. The blue, red and yellow lines correspond to values of $f_1$ at, above and below the critical value 0.4244.}
    \label{fig,edgejumpplot}
\end{figure}
Figure \ref{fig,edgejumpplot} shows a plot of the jump in $R_{a}f(\x+s\theta^*,\theta(\omega))$ across $\omega=0$ as we travel along the straight line segment $E=\{\x=(x,y)\ : \ x\in[-0.1,0.6],~y=-0.5\}$. The bottom edge of the inner square, where the jump occurs, is the subset of $E$ with $x\in[0,0.5]$. In all three cases, before $x=0$ the entire edge is visible and therefore the jump is constant until we hit the start of the edge at $0$. Increasing the value of $f_1$ decreases the initial jump because $f_2$ is fixed at $1$ and is attenuated, whereas outside the inner square $a=0$. After $0.5$ the edge is passed and there is no jump regardless of the choice of $f_1$. The yellow line corresponds with a choice of $f_1$ smaller than the critical value and in this case we see that at the end of the edge there is a kink before going to zero (i.e. the derivative is not continuous). Similarly in the case where $f_1$ is larger than the critical value (the red graph) we see a kink in the graph again at $x=0.5$. When $f_1=0.4244$ the graph is smooth and has no kink at $0.5$ (i.e. the derivative does not jump). This matches the theory given in section \ref{sec,theory} as we would expect the cancelling case to go smoothly to zero as we approach the end of the edge. This also means that, as in the radial case, we are able to see the effects of a cancelling choice of $c,f_1$ and $f_2$ in the data itself.

As in the radial case given in section \ref{section,radialcase}, even though the effects of the choice of $f_1$ can be witnessed in the data, we can still obtain usable reconstructions from potentially problematic cases. Figure \ref{fig,squarerecovery} shows two reconstruction cases using the phantom for $a$ and $f$ given in Figure \ref{fig,squaregroundtruth}. In both cases the admissible set for $a$ is $\mathcal{A}=\{0,0.2,0.4,0.8,1 \}$. Similar to the radial case there do not appear to be any additional noticeable artefacts present in the $f_1=0.4244$ case. The only degradation appears to be in the top edge of the larger square. Again, even widely varying the reconstruction parameters, provided the algorithm converges, does not change the solution significantly which implies that there are no extra solutions introduced when we have cancellation in equation \eqref{eq,edgecancel}. In this case, it is possible that cancellation  does not occur when considering the line oriented in the opposite direction (i.e. \eqref{eq,edgecancel} is not zero at the opposite end of the edge for the line oriented in the opposite direction) meaning our theoretical results may still apply. This concludes the numerical results section. We now give a summary of the work in this paper as well as a few avenues for further research.
\begin{figure}[H]
    \centering
    \includegraphics[scale=0.6]{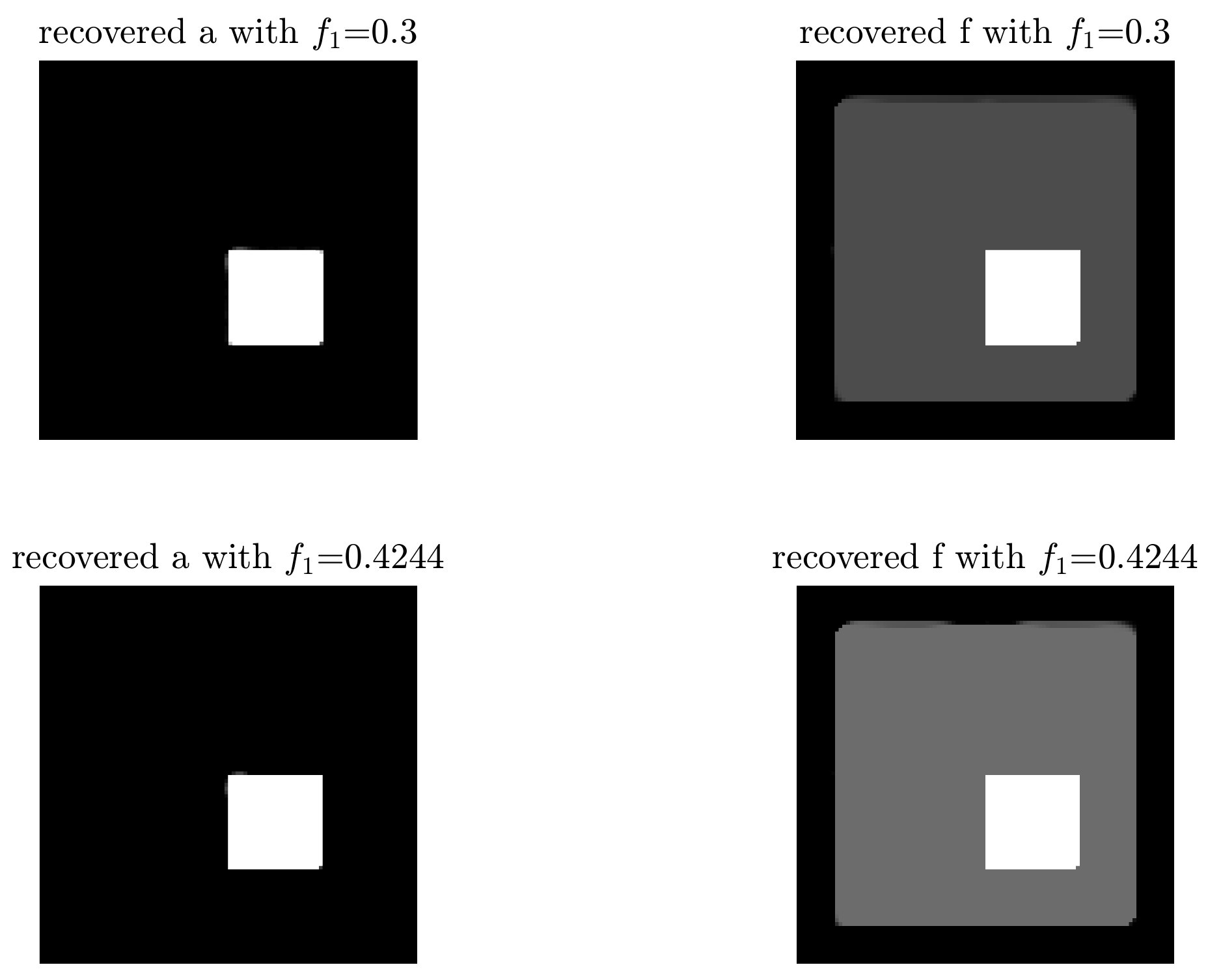}
    \caption{Joint reconstructions of $a$ and $f$. The left column shows $a$ and the right column shows $f$. The top row shows reconstructions for $a$ and $f$ when $f_1=0.3$ in the true phantom and the bottom row uses $f_1=0.4244$.}
    \label{fig,squarerecovery}
\end{figure}

\section{Conclusion}

In this paper we have solved the identification problem for SPECT with multi-bang $a$ and p.a.b. $C^2$ $f$ under non-cancelling conditions. 

We have established conditions which match up known microlocal analysis results for the linearized Attenuated X-Ray transform and been able to produce additional results for cases when $a$ and $f$ jump over straight edges.

We have also given some numerical reconstructions for cases where the theoretical methods presented in section \ref{sec,thmdef} are no longer valid. We have shown that even when we consider cancelling cases joint recovery of $a$ and $f$ is possible. The exact cause of the unique recovery is something which needs further research.

\bibliography{mbpaper2}{}
\bibliographystyle{plain}
\end{document}